\numberwithin{equation}{section}
\numberwithin{figure}{section}
\theoremstyle{plain}
\newtheorem{thm}{\protect\theoremname}[section]
  \theoremstyle{definition}
  \newtheorem{defn}[thm]{\protect\definitionname}
  \theoremstyle{plain}
  \newtheorem{lem}[thm]{\protect\lemmaname}
  \theoremstyle{plain}
  \newtheorem{prop}[thm]{\protect\propositionname}
  \theoremstyle{plain}
  \newtheorem{cor}[thm]{\protect\corollaryname}
\theoremstyle{definition}
  \providecommand{\corollaryname}{Corollary}
  \providecommand{\definitionname}{Definition}
  \providecommand{\lemmaname}{Lemma}
  \providecommand{\propositionname}{Proposition}
\providecommand{\theoremname}{Theorem}
\begin{document}

\title{The Filtration Induced by a Locally Nilpotent Derivation}

\author{Bachar~~ALHAJJAR }
\begin{abstract}
We investigate the filtration corresponding to the degree function
induced by a non-zero locally nilpotent derivations $\partial$ and
its associated graded algebra. As an application we provide an efficient
method to recover the Makar-Limanov invariants, isomorphism classes
and automorphism groups of classically known algebras. We also present
a new class of examples which can be fully described with this method.
\end{abstract}

\address{Bachar ALHAJJAR, Institut de Math\'ematiques de Bourgogne, Universit\'e
de Bourgogne, 9 Avenue Alain Savary, BP 47870, 21078 Dijon, France.}

\email{bachar.alhajjar@gmail.com}

\maketitle

\section*{\textbf{\normalsize Introduction}}

Let $k$ be a field of characteristic zero, and let $B$ be a commutative
$k$-domain. A $k$-derivation $\partial\in\mathrm{Der}_{k}(B)$ is
said to be\emph{ locally nilpotent} if for every $a\in B$, there
is an integer $n\geq0$ such that $\partial^{n}(a)=0$. An important
invariant of $k$-domains admitting non-trivial locally nilpotent
derivations is the so called \emph{Makar-Limanov invariant} which
was defined by Makar-Limanov as the intersection $\mathrm{ML}(B)\subset B$
of kernels of all locally nilpotent derivations of $B$ (\cite{key-M-L1996}).
This invariant was initially introduced as a tool to distinguish certain
$k$-domains from polynomial rings but it has many other applications
for the study of $k$-algebras and their automorphism groups (\cite{key-ML2001}).
One of the main difficulties in applications is to compute this invariant
without a prior knowledge of all locally nilpotent derivations of
a given $k$-domain. 

In \cite{key-Kaliman Makar} S. Kaliman and L. Makar-Limanov developed
general techniques to determine the $\mathrm{ML}$-invariant for a
class of finitely generated $k$-domains $B$. The idea is to reduce
the problem to the study of homogeneous locally nilpotent derivations
on graded algebras $\mathrm{Gr}(B)$ associated to $B$. For this,
one considers appropriate filtrations $\mathcal{F}=\{\mathcal{F}_{i}\}_{i\in\mathbb{R}}$
on $B$ generated by so called real-valued weight degree functions
in such a way that every non-zero locally nilpotent derivation on
$B$ induces a non-zero homogeneous locally nilpotent derivation on
the associated graded algebra $\mathrm{Gr_{\mathcal{F}}}(B)$.

In particular, every $k$-domain $B$ admitting a non-zero locally
nilpotent derivation $\partial$ comes equipped with a natural filtration
by the $k$-sub-vector-spaces $\mathcal{F}_{i}=ker(\partial^{i+1})$,
$i\geq0$, that we call the \emph{$\partial$-filtration}. 

In this article we show that this filtration is convenient for the
computation of the $\mathrm{ML}$-invariant, and we give general methods
to describe the sub-spaces $ker(\partial^{i+1})$ and their associated
graded algebra.

Knowing this filtration gives a very precise understanding of the
structure of \emph{semi-rigid} $k$-domains, that is,$k$-domains
for which every locally nilpotent derivation gives rise to the same
filtration. For such rings the study of the $\partial$-filtration
is a very efficient tool to determine isomorphism types and automorphism
groups. We illustrate how the computation of $\mathrm{ML}$-invariant
of classically known semi-rigid $k$-domains can be simplified using
these types of filtration. We also present a new class of semi-rigid
$k$-domains which can be studied with this method.

\section{\textbf{Preliminaries}}

In this section we briefly recall basic facts on filtered algebra
and their relation with derivation in a form appropriate to our needs. 

In the sequel, unless otherwise specified $B$ will denote a commutative
domain over a field $k$ of characteristic zero. The set $\mathbb{Z}_{\geqslant0}$
of non-negative integers will be denoted by $\mathbb{N}$.

\subsection{\label{sub:Filtration-and-the} Filtration and associated graded
algebra}
\begin{defn}
\label{def.proper.filtration} An \textit{$\mathbb{N}$-filtration}
of $B$ is a collection $\{\mathcal{F}_{i}\}_{i\in\mathbb{N}}$ of
$k$-sub-vector-spaces of $B$ with the following properties: 

1- $\mathcal{F}_{i}\subset\mathcal{F}_{i+1}$ for all $i\in\mathbb{N}$
. 

2- $B=\cup_{i\in\mathbb{N}}\mathcal{F}_{i}$ . 

3- $\mathcal{F}_{i}.\mathcal{F}_{j}\subset\mathcal{F}_{i+j}$ for
all $i,j\in\mathbb{N}$ . 

\noindent  The filtration is called \emph{proper} if the following
additional property holds: 

4- If $a\in\mathcal{F}_{i}\setminus\mathcal{F}_{i-1}$ and $b\in\mathcal{F}_{j}\setminus\mathcal{F}_{j-1}$,
then $ab\in\mathcal{F}_{i+j}\setminus\mathcal{F}_{i+j-1}$.
\end{defn}
\noindent  There is a one-to-one correspondence between proper $\mathbb{N}$-filtrations
and so called $\mathbb{N}$-degree functions:
\begin{defn}
An $\mathbb{N}$\emph{-degree function} on $B$ is a map $\deg:B\longrightarrow\mathbb{N}\mathbb{\cup}\{-\infty\}$
such that, for all $a,b\in B$, the following conditions are satisfied: 

(1) $\deg(a)=-\infty$ $\Leftrightarrow$ $a=0$.

(2) $\deg(ab)=\deg(a)+\deg(b)$.

(3) $\deg(a+b)\leq max\{\deg(a),\deg(b)\}$.

\noindent  If the equality in (2) replaced by the inequality $\deg(ab)\leq\deg(a)+\deg(b)$,
we say that $\deg$ is an $\mathbb{N}$\emph{-semi-degree function.}
\end{defn}
Indeed, for an $\mathbb{N}$-degree function on $B$, the sub-sets
$\mathcal{F}_{i}=\{b\in B\mid\deg(b)\leq i\}$ are $k$-subvector
spaces of $B$ that give rise to a proper $\mathbb{N}$-filtration
$\{\mathcal{F}_{i}\}_{i\in\mathbb{N}}$. Conversely, every proper
$\mathbb{N}$-filtration $\{\mathcal{F}_{i}\}_{i\in\mathbb{N}}$,
yields an $\mathbb{N}$-degree function $\omega:B\longrightarrow\mathbb{N}\mathbb{\cup}\{-\infty\}$
defined by $\omega(0)=-\infty$ and $\omega(b)=i$ if $b\in\mathcal{F}_{i}\setminus\mathcal{F}_{i-1}$.
\begin{defn}
\label{Def.Gralg} Given a $k$-domain $B=\cup_{i\in\mathbb{N}}\mathcal{F}_{i}$
equipped with a proper $\mathbb{N}$-filtration, the associated graded
algebra $\mathrm{Gr}(B)$ is the $k$-vector space 
\[
\mathrm{Gr}(B)=\oplus_{i\in\mathbb{N}}\mathcal{F}_{i}/\mathcal{F}_{i-1}
\]
equipped with the unique multiplicative structure for which the product
of the elements $a+\mathcal{F}_{i-1}\in\mathcal{F}_{i}/\mathcal{F}_{i-1}$
and $b+\mathcal{F}_{j-1}\in\mathcal{F}_{j}/\mathcal{F}_{j-1}$, where
$a\in\mathcal{F}_{i}$ and $b\in\mathcal{F}_{j}$, is the element
\[
(a+\mathcal{F}_{i-1})(b+\mathcal{F}_{j-1}):=ab+\mathcal{F}_{i+j-1}\in\mathcal{F}_{i+j}/\mathcal{F}_{i+j-1}.
\]
Property 4 (proper) in Definition \ref{def.proper.filtration} ensures
that $\mathrm{Gr}(B)$ is a commutative $k$-domain when $B$ is an
integral domain. \noindent  Since for each $a\in B$ the set $\{n\in\mathbb{N}\mid a\in\mathcal{F}_{n}\}$
has a minimum, there exists $i$ such that $a\in\mathcal{F}_{i}$
and $a\notin\mathcal{F}_{i-1}$. So we can define a $k$-linear map
$\mathrm{gr}:B\longrightarrow\mathrm{Gr}(B)$ by sending $a$ to its
class in $\mathcal{F}_{i}/\mathcal{F}_{i-1}$, i.e $a\mapsto a+\mathcal{F}_{i-1}$,
and $\mathrm{gr}(0)=0$. We will frequently denote $\mathrm{gr}(a)$
simply by $\overline{a}$. Observe that $\mathrm{gr}(a)=0$ if and
only if $a=0$. 
\end{defn}
Denote by $\deg$ the $\mathbb{N}$-degree function $\deg:B\longrightarrow\mathbb{N}\cup\{-\infty\}$
corresponding to the proper $\mathbb{N}$-filtration $\{\mathcal{F}_{i}\}_{i\in\mathbb{N}}$.
We have the following properties.
\begin{lem}
\label{lem:graded relation} Given $a,b\in B$ the following holds:

\emph{P1)} $\overline{a\, b}=\overline{a}\,\overline{b}$, i.e. $\mathrm{gr}$
is a multiplicative map. 

\emph{P2) }If $\deg(a)>\deg(b)$, then $\overline{a+b}=\overline{a}$. 

\emph{P3)} If $\deg(a)=\deg(b)=\deg(a+b)$, then $\overline{a+b}=\overline{a}+\overline{b}$. 

\emph{P4)} If $\deg(a)=\deg(b)>\deg(a+b)$, then $\overline{a}+\overline{b}=0$,
in particular $\mathrm{gr}$ is not an additive map in general.\end{lem}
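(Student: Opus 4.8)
The plan is to derive all four statements directly from the defining formula $\overline{a} = a + \mathcal{F}_{i-1}$ with $i = \deg(a)$, keeping careful track of which homogeneous component $\mathcal{F}_i/\mathcal{F}_{i-1}$ each class lives in, and invoking the quotient vector-space operations of $\mathrm{Gr}(B)$.

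For P1 I would set $i = \deg(a)$ and $j = \deg(b)$, so that $a \in \mathcal{F}_i \setminus \mathcal{F}_{i-1}$ and $b \in \mathcal{F}_j \setminus \mathcal{F}_{j-1}$. The key input is property 4 of a proper filtration (Definition \ref{def.proper.filtration}), which forces $ab \in \mathcal{F}_{i+j} \setminus \mathcal{F}_{i+j-1}$, hence $\deg(ab) = i+j$ and $\overline{ab} = ab + \mathcal{F}_{i+j-1}$. Comparing this with the defining formula for the product in $\mathrm{Gr}(B)$ (Definition \ref{Def.Gralg}), namely $\overline{a}\,\overline{b} = ab + \mathcal{F}_{i+j-1}$, yields the equality at once; the degenerate case $a = 0$ or $b = 0$ is immediate since both sides then vanish.

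For P2, write $i = \deg(a) > j = \deg(b)$. Since $b \in \mathcal{F}_j \subset \mathcal{F}_{i-1}$, we have $a + b \in \mathcal{F}_i$. The delicate point is to verify that $a + b \notin \mathcal{F}_{i-1}$: otherwise $a = (a+b) - b$ would lie in $\mathcal{F}_{i-1}$, contradicting $\deg(a) = i$. Thus $\deg(a+b) = i$, and because $b \in \mathcal{F}_{i-1}$ the classes of $a+b$ and $a$ coincide in $\mathcal{F}_i/\mathcal{F}_{i-1}$, giving $\overline{a+b} = \overline{a}$. Properties P3 and P4 are then the vector-space arithmetic of a single quotient: when $\deg(a)=\deg(b)=i$, both $\overline{a}$ and $\overline{b}$ lie in the same component $\mathcal{F}_i/\mathcal{F}_{i-1}$, so $\overline{a}+\overline{b} = (a+b)+\mathcal{F}_{i-1}$; this equals $\overline{a+b}$ exactly when $\deg(a+b)=i$, which is P3, and equals $0$ exactly when $a+b \in \mathcal{F}_{i-1}$, i.e. when $\deg(a+b) < i$, which is P4.

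Overall I expect no serious obstacle, as each property reduces to the definitions. The only step needing a short argument is the degree computation in P2, ruling out $a+b \in \mathcal{F}_{i-1}$; and one must keep in mind that the sum of two homogeneous elements is only meaningful within a single component, which is precisely why P3 and P4 carry the hypothesis $\deg(a)=\deg(b)$ and why $\mathrm{gr}$ is not additive in general.
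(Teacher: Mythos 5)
Your proof is correct and follows essentially the same route as the paper: both arguments work directly from the definition of the product in $\mathrm{Gr}(B)$ and the quotient structure of $\mathcal{F}_i/\mathcal{F}_{i-1}$, with P1 resting on multiplicativity of the degree (equivalently, property 4 of a proper filtration) and P2--P4 on arithmetic within a single homogeneous component. If anything, you are slightly more careful than the paper: you explicitly verify in P2 that $a+b \notin \mathcal{F}_{i-1}$ (via $a = (a+b)-b$), a point the paper's proof asserts without justification, and you phrase P4 directly where the paper argues by contradiction.
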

\begin{proof}
Let us assume that $\deg(a)=i$ and $\deg(b)=j$. By definition, $\deg(ab)=i+j$
means that $ab\in\mathcal{F}_{i+j}$ and $ab\notin\mathcal{F}_{i+j-1}$,
so $\overline{ab}=ab+\mathcal{F}_{i+j-1}:=(a+\mathcal{F}_{i-1})(b+\mathcal{F}_{j-1})=\overline{a}\,\overline{b}$.
Which gives P1. For P2 we observe that since $\deg(a+b)=\deg(a)$,
we have $\overline{a+b}=(a+b)+\mathcal{F}_{i-1}=(a+\mathcal{F}_{i-1})+(b+\mathcal{F}_{i-1})$,
and since $\mathcal{F}_{j-1}\subset\mathcal{F}_{j}\subseteq\mathcal{F}_{i-1}$
as $i>j$, we get $b+\mathcal{F}_{i-1}=0$. P3) is immediate, by definition.
Finally, assume by contradiction that $\overline{a}+\overline{b}\neq0$,
then $\overline{a}+\overline{b}=(a+\mathcal{F}_{i-1})+(b+\mathcal{F}_{i-1})=((a+b)+\mathcal{F}_{i-1})\neq0$,
which means that $a+b\notin\mathcal{F}_{i-1}$ and $\deg(a+b)=i$,
which is absurd. So P4 follows.
\end{proof}

\subsection{Derivations}

\indent\newline\noindent  By a \textit{$k$-derivation} of $B$,
we mean a $k$-linear map $D:B\longrightarrow B$ which satisfies
the Leibniz rule: For all $a,b\in B$; $D(ab)=aD(b)+bD(a)$. The set
of all $k$-derivations of $B$ is denoted by $\mathrm{Der}_{k}(B)$. 

\noindent  The \textit{kernel} of a derivation $D$ is the subalgebra
$\ker D=\left\{ b\in B;D(b)=0\right\} $ of $B$.

\noindent  The \textit{plinth ideal} of $D$ is the ideal $\mathrm{pl}(D)=\ker D\cap D(B)$
of $\mathrm{\ker}D$, where $D(B)$ denotes the image of $B$.

\noindent  An element $s\in B$ such that $D(s)\in\ker(D)\setminus\{0\}$
is called a \textit{local slice} for $D$.
\begin{defn}
Given a $k$-algebra $B=\cup_{i\in\mathbb{N}}\mathcal{F}_{i}$ equipped
with a proper $\mathbb{N}$-filtration, a $k$-derivation $D$ of
$B$ is said to respect the filtration if there exists an integer
$d$ such that $D(\mathcal{F}_{i})\subset\mathcal{F}_{i+d}$ for all
$i\in\mathbb{N}$. 

\noindent  If so, we define a $k$-linear map $\overline{D}:\mathrm{Gr}(B)\longrightarrow\mathrm{Gr}(B)$
as follows: If $D=0$, then $\overline{D}=0$ the zero map. Otherwise,
if $D\neq0$ then we let $d$ be the least integer such that $D(\mathcal{F}_{i})\subset\mathcal{F}_{i+d}$
for all $i\in\mathbb{N}$ and we define 
\[
\overline{D}:\mathcal{F}_{i}/\mathcal{F}_{i-1}\longrightarrow\mathcal{F}_{i+d}/\mathcal{F}_{i+d-1}
\]
by the rule $\overline{D}(a+\mathcal{F}_{i-1})=D(a)+\mathcal{F}_{i+d-1}$.
One checks that $\overline{D}$ satisfies the Leibniz rule, therefore
it is a $k$-derivation of the graded algebra $\mathrm{Gr}(B)$. Moreover
it is homogeneous of degree $d$, i.e $\overline{D}$ sends homogeneous
elements of degree $i$ to zero or to homogeneous elements of degree
$i+d$ . 
\end{defn}
Observe that $\overline{D}=0$ if and only if $D=0$. In addition,
$\mathrm{gr}(\ker D)\subset\ker\overline{D}$.

\section{$\mathrm{LND}$\textbf{-Filtrations and Associated Graded Algebras}}

In this section we introduce the $\partial$-filtration associated
with a locally nilpotent derivation $\partial$. We explain how to
compute this filtration and its associated graded algebra in certain
situation.
\begin{defn}
A $k$-derivation $\partial\in\mathrm{Der}_{k}(B)$ is said to be\emph{
locally nilpotent} if for every $a\in B$, there exists $n\in\mathbb{N}$
(depending of $a$) such that $\partial^{n}(a)=0$. The set of all
locally nilpotent derivations of $B$ is denoted by $\mathrm{\mathrm{LND}}(B)$.

\noindent  In particular, every locally nilpotent derivation $\partial$
of $B$ gives rise to a proper $\mathbb{N}$-filtration of $B$ by
the sub-spaces $\mathcal{F}_{i}=\mathrm{\ker}\partial^{i}$, $i\in\mathbb{N}$,
that we call the \emph{$\partial$-filtration. }It is straightforward
to check (see Prop. 1.9 in \cite{key-gene}) that the $\partial$-filtration
corresponds to the $\mathbb{N}$-degree function $\deg_{\partial}:B\longrightarrow\mathbb{N}\cup\{-\infty\}$
defined by 
\end{defn}
\begin{center}
$\deg_{\partial}(a):=min\{i\in\mathbb{N}\mid\partial^{i+1}(a)=0\}$,
and $\deg_{\partial}(0):=-\infty$. 
\par\end{center}

\noindent  Note that by definition $\mathcal{F}_{0}=\ker\partial$
and that $\mathcal{F}_{1}\setminus\mathcal{F}_{0}$ consists of all
local slices for $\partial$.\\
Let $\mathrm{Gr}_{\partial}(B)=\oplus_{i\in\mathbb{N}}\mathcal{F}_{i}/\mathcal{F}_{i-1}$
denote the associated graded algebra relative to the $\partial$-filtration
$\{\mathcal{F}_{i}\}_{i\in\mathbb{N}}$. Let $\mathrm{gr}_{\partial}:B\longrightarrow\mathrm{Gr_{\partial}}(B)$;
$a\overset{\mathrm{gr}_{\partial}}{\longmapsto}\overline{a}$ be the
natural map between $B$ and $\mathrm{Gr_{\partial}}(B)$ defined
in \ref{Def.Gralg}, where $\overline{a}$ denote $\mathrm{gr}_{\partial}(a)$.

The next Proposition, which is due to Daigle (Theorem 2.11 in \cite{key-gene}),
implies in particular that if $B$ is of finite transcendence degree
over $k$, then every non-zero $D\in\mathrm{LND}(B)$ respects the
$\partial$-filtration and therefore induces a non-zero homogeneous
locally nilpotent derivation $\overline{D}$ of $\mathrm{Gr}_{\partial}(B)$.
\begin{prop}
\label{Daigle} $($\textbf{\emph{Daigle}}$)$ Suppose that $B$ is
a commutative domain, of finite transcendence degree over $k$. Then
for every pair $D\in\mathrm{Der}_{k}(B)$ and $\partial\in\mathrm{\mathrm{LND}}(B)$,
$D$ respects the $\partial$-filtration. Consequently, $\overline{D}$
is a well defined homogeneous derivation of the integral domain $\mathrm{Gr}_{\partial}(B)$
relative to this filtration, and it is locally nilpotent if $D$ is
locally nilpotent.
\end{prop}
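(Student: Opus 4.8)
The statement has three parts: that $D$ respects the $\partial$-filtration, that $\overline{D}$ is then a well-defined homogeneous derivation of $\mathrm{Gr}_\partial(B)$, and that $\overline{D}$ is locally nilpotent when $D$ is. The middle part will be immediate from the definition of $\overline{D}$ recalled before the statement (together with the fact, noted in Definition \ref{Def.Gralg}, that a proper filtration of a domain produces a graded domain), so the real content is the first part. The plan is to reduce it to a nilpotency statement for the operator $\mathrm{ad}_{\partial}:=[\partial,-]$ acting on $\mathrm{Der}_{k}(B)$.

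First I would record the operator identity
\[
\partial^{m}\circ D=\sum_{j=0}^{m}\binom{m}{j}\,(\mathrm{ad}_{\partial}^{\,j}D)\circ\partial^{m-j},\qquad \mathrm{ad}_{\partial}(E)=\partial E-E\partial,
\]
proved by induction on $m$ from $\partial D=D\partial+\mathrm{ad}_{\partial}(D)$, noting that each $\mathrm{ad}_{\partial}^{\,j}D$ is again a $k$-derivation. Suppose we already know that $\mathrm{ad}_{\partial}^{\,N+1}(D)=0$ for some $N$. Evaluating the identity at an element $a$ with $\deg_{\partial}(a)=n$, the term indexed by $j$ vanishes whenever $j\geq N+1$ (since $\mathrm{ad}_{\partial}^{\,j}D=0$) or whenever $m-j>n$ (since then $\partial^{m-j}(a)=0$); hence for $m\geq n+N+1$ every term vanishes, giving $\partial^{m}(D(a))=0$. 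Thus $\deg_{\partial}(D(a))\leq\deg_{\partial}(a)+N$ for all $a$, i.e. $D(\mathcal{F}_{i})\subset\mathcal{F}_{i+N}$ for every $i$, so $D$ respects the filtration with $d\leq N$.

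The crux is therefore to prove $\mathrm{ad}_{\partial}^{\,N}(D)=0$ for some $N$, and this is exactly where finite transcendence degree enters; I expect this to be the main obstacle. The strategy is to establish the vanishing pointwise on a transcendence basis and then promote it to the whole ring by separability. Using the dual expansion
\[
\mathrm{ad}_{\partial}^{\,m}(D)=\sum_{i=0}^{m}(-1)^{i}\binom{m}{i}\,\partial^{m-i}\circ D\circ\partial^{i}
\]
and fixing any $a\in B$, local nilpotence of $\partial$ kills all terms with $i>\deg_{\partial}(a)$, while each surviving term $\partial^{m-i}(D(\partial^{i}(a)))$ vanishes once $m-i>\deg_{\partial}(D(\partial^{i}(a)))$; hence $(\mathrm{ad}_{\partial}^{\,m}D)(a)=0$ for all $m$ large enough depending on $a$. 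Now I would choose a transcendence basis $t_{1},\dots,t_{n}\in B$ of $B$ over $k$ (possible since $\mathrm{trdeg}_{k}B=n<\infty$) and pick $M$ large enough that $(\mathrm{ad}_{\partial}^{\,M}D)(t_{j})=0$ for every $j$. Extending all the derivations uniquely to $K=\mathrm{Frac}(B)$ (the bracket of extensions being the extension of the bracket) and using that in characteristic zero $t_{1},\dots,t_{n}$ is a separating transcendence basis, a $k$-derivation of $K$ is determined by its values on the $t_{j}$; since $\mathrm{ad}_{\partial}^{\,M}(D)$ vanishes on each $t_{j}$, it is the zero derivation of $K$, hence of $B$. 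This yields $\mathrm{ad}_{\partial}^{\,N}(D)=0$ with $N=M$, completing the first part.

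Granting that $D$ respects the filtration, $\overline{D}$ is defined and is a homogeneous $k$-derivation of the domain $\mathrm{Gr}_{\partial}(B)$ by the construction recalled above. For local nilpotence I would verify, by induction on $m$ from the defining rule $\overline{D}(a+\mathcal{F}_{i-1})=D(a)+\mathcal{F}_{i+d-1}$, the identity $\overline{D}^{\,m}(a+\mathcal{F}_{i-1})=D^{m}(a)+\mathcal{F}_{i+md-1}$ for every homogeneous class. If $D$ is locally nilpotent, choosing $N$ with $D^{N}(a)=0$ gives $\overline{D}^{\,N}(\overline{a})=0$, so $\overline{D}$ is locally nilpotent on homogeneous elements; since every element of $\mathrm{Gr}_{\partial}(B)$ is a finite sum of homogeneous ones, $\overline{D}$ is locally nilpotent on all of $\mathrm{Gr}_{\partial}(B)$.
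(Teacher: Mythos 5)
Your proof is correct, but there is nothing in the paper to compare it against: the paper does not prove this proposition at all. It is quoted as a known theorem of Daigle, with a citation to Theorem 2.11 of Freudenburg's book \cite{key-gene}, and the text moves straight on to applications. What you have done, in effect, is reconstruct the standard argument from that cited source. Your two operator identities are both valid: writing $L_{\partial}$, $R_{\partial}$ for left and right composition with $\partial$ on $\mathrm{End}_{k}(B)$, one has $L_{\partial}=R_{\partial}+\mathrm{ad}_{\partial}$ with $R_{\partial}$ and $\mathrm{ad}_{\partial}$ commuting, which gives both the expansion of $\partial^{m}\circ D$ and the dual expansion of $\mathrm{ad}_{\partial}^{\,m}(D)$ by the binomial theorem. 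You also correctly isolate the crux: the finite transcendence degree hypothesis is needed only to upgrade the pointwise statement (for each $a$ there is $m_{a}$ with $(\mathrm{ad}_{\partial}^{\,m}D)(a)=0$ for all $m\geq m_{a}$, which follows from local nilpotence of $\partial$ alone) to the global statement $\mathrm{ad}_{\partial}^{\,M}(D)=0$. Your mechanism for that upgrade is sound: choose $M$ uniform over a finite transcendence basis $t_{1},\dots,t_{n}\subset B$, pass to $K=\mathrm{Frac}(B)$ where derivations extend uniquely and brackets commute with extension, and use that in characteristic zero any transcendence basis is separating, so a $k$-derivation of $K$ vanishing on $t_{1},\dots,t_{n}$ vanishes on $k(t_{1},\dots,t_{n})$ by the quotient rule and then on all of $K$ by the minimal-polynomial argument. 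Two cosmetic points only: your exponents drift between the hypothesis $\mathrm{ad}_{\partial}^{\,N+1}(D)=0$ in the first half and the conclusion $\mathrm{ad}_{\partial}^{\,N}(D)=0$ in the second, which is harmless since the vanishing propagates to all higher powers of $\mathrm{ad}_{\partial}$; and the closing induction $\overline{D}^{\,m}(a+\mathcal{F}_{i-1})=D^{m}(a)+\mathcal{F}_{i+md-1}$ should be read with the convention $\mathcal{F}_{j}=0$ for $j<0$, to cover the case where the least degree $d$ is negative (as it is for $\overline{\partial}$ itself). Neither affects correctness.
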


\subsection{Computing the $\partial$-filtration}

\indent\newline\noindent  Here, given a finitely generated $k$-domain
$B$, we describe a general method which enables the computation of
the $\partial$-filtration for a locally nilpotent derivation $\partial$
with finitely generated kernel. First we consider a more general situation
where the plinth ideal $\mathrm{pl}(\partial)$ is finitely generated
as an ideal in $\ker\partial$ then we deal with the case where $\ker\partial$
is itself finitely generated as a $k$-algebra.

Let $B=k[X_{1},\ldots,X_{n}]/I=k[x_{1},\ldots,x_{n}]$ be a finitely
generated $k$-domain, and let $\partial\in\mathrm{LND}(B)$ be such
that $\mathrm{pl}(\partial)$ is generated by precisely $m$ elements
$f_{1},\ldots,f_{m}$ as an ideal in $\ker\partial$. Denote by $\mathcal{F}=\{\mathcal{F}_{i}\}_{i\in\mathbb{N}}$
the $\partial$-filtration, then:

By definition $\mathcal{F}_{0}=\ker\partial$. Furthermore, given
elements $s_{i}\in\mathcal{F}_{1}$ such that $\partial(s_{i})=f_{i}$
for every $i\in\{1,\ldots,m\}$, it is straightforward to check that 

\textit{\emph{
\begin{eqnarray*}
\mathcal{F}_{1} & = & \mathcal{F}_{0}s_{1}+\ldots+\mathcal{F}_{0}s_{m}+\mathcal{F}_{0}.
\end{eqnarray*}
}}Letting $\deg_{\partial}(x_{i})=d_{i}$, we denote by $H_{j}$ the
$\mathcal{F}_{0}$-sub-module in $B$ generated by elements of degree
$j$ relative to $\deg_{\partial}$ of the form $s_{1}^{u_{1}}\ldots s_{m}^{u_{m}}x_{1}^{v_{1}}\ldots x_{n}^{v_{n}}$,
i.e., 
\[
H_{j}:=\sum_{\sum_{j}u_{j}+\sum_{i}d_{i}.v_{i}=j}\mathcal{F}_{0}\left(s_{1}^{u_{1}}\ldots s_{m}^{u_{m}}x_{1}^{v_{1}}\ldots x_{n}^{v_{n}}\right)
\]
 where $u_{j},v_{i}\in\mathbb{N}$ for all $i$ and $j$. The integer
$\sum_{j}u_{j}+\sum_{i}d_{i}v_{i}$ is nothing but $\deg_{\partial}(s_{1}^{u_{1}}.s_{2}^{u_{2}}\ldots.s_{m}^{u_{m}}.x_{1}^{v_{1}}.x_{2}^{v_{2}}.\ldots.x_{n}^{v_{n}})$.
Then we define a new $\mathbb{N}$-filration $\mathcal{G}=\{\mathcal{G}_{i}\}_{i\in\mathbb{N}}$
of $B$ by setting 
\[
\mathcal{G}_{i}=\sum_{j\leq i}H_{j}.
\]
By construction $\mathcal{G}_{i}\subseteq\mathcal{F}_{i}$ for all
$i\in\mathbb{N}$, with equality for $i=0,1$. The following result
provides a characterization of when these two filtrations coincide:
\begin{lem}
\label{lem:equal-if-it-is-proper} The filtrations $\mathcal{F}$
and $\mathcal{G}$ are equal if and only if $\mathcal{G}$ is proper.\end{lem}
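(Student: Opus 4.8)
The forward implication is immediate: if $\mathcal{F}$ and $\mathcal{G}$ coincide then $\mathcal{G}$ inherits properness from $\mathcal{F}$, since the $\partial$-filtration is proper by construction. So the whole content lies in the converse, and throughout I assume $\mathcal{G}$ is proper and aim to prove the reverse inclusion $\mathcal{F}_i\subseteq\mathcal{G}_i$ for every $i$, the inclusion $\mathcal{G}_i\subseteq\mathcal{F}_i$ being already known. First I would record the bookkeeping. Let $\omega$ be the semi-degree function attached to $\mathcal{G}$, that is $\omega(a)=\min\{i:a\in\mathcal{G}_i\}$. Since $\mathcal{G}_i\subseteq\mathcal{F}_i$ we have $\deg_{\partial}\le\omega$ everywhere, with equality on $\mathcal{F}_0=\ker\partial$, on each slice $s_l$ (value $1$) and on each $x_i$ (value $d_i$). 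Properness of $\mathcal{G}$ means precisely that $\omega$ is a genuine degree function, hence multiplicative; consequently $\mathrm{gr}_{\mathcal{G}}$ is multiplicative and $\mathrm{Gr}_{\mathcal{G}}(B)$ is a domain. Multiplicativity forces $\omega$ to agree with $\deg_{\partial}$ on every monomial $s_1^{u_1}\cdots s_m^{u_m}x_1^{v_1}\cdots x_n^{v_n}$, and since $\mathcal{G}_i/\mathcal{G}_{i-1}$ is the image of $H_i$, it shows that $\mathrm{Gr}_{\mathcal{G}}(B)$ is generated over $\ker\partial$ by the \emph{finitely many} leading forms $\mathrm{gr}_{\mathcal{G}}(s_l)$ and $\mathrm{gr}_{\mathcal{G}}(x_i)$.

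The inclusion $\mathcal{G}_i\subseteq\mathcal{F}_i$ induces a natural homomorphism of graded $k$-algebras $\phi:\mathrm{Gr}_{\mathcal{G}}(B)\to\mathrm{Gr}_{\partial}(B)$, sending the class of $a$ modulo $\mathcal{G}_{i-1}$ to its class modulo $\mathcal{F}_{i-1}$. A short computation shows that $\phi$ is injective if and only if $\mathcal{G}_i\cap\mathcal{F}_{i-1}=\mathcal{G}_{i-1}$ for all $i$, which is exactly the statement $\deg_{\partial}=\omega$, i.e. $\mathcal{F}=\mathcal{G}$. Thus the goal becomes: \emph{properness of $\mathcal{G}$ implies that $\phi$ is injective}. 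Observe that $\phi$ carries the generators $\mathrm{gr}_{\mathcal{G}}(s_l),\mathrm{gr}_{\mathcal{G}}(x_i)$ to $\overline{s_l},\overline{x_i}$, so its image is the sub-$\ker\partial$-algebra $A=\ker\partial[\overline{s_1},\dots,\overline{s_m},\overline{x_1},\dots,\overline{x_n}]$ of $\mathrm{Gr}_{\partial}(B)$.

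The decisive step is a dimension count, and this is where I expect the real work to lie. Passing to $K=\mathrm{Frac}(\ker\partial)$ and localizing at $\ker\partial\setminus\{0\}$, the map $\phi$ becomes a homomorphism $\phi_K:R_K\to S_K$ of $K$-algebras with $R_K$ a finitely generated $K$-domain (it is generated over $K$ by the images of the $s_l$ and $x_i$), and $\ker\phi=0$ if and only if $\ker\phi_K=0$. Using the standard facts that $\mathrm{trdeg}_k B=\mathrm{trdeg}_k\ker\partial+1$ for a nonzero locally nilpotent derivation and that passing to the associated graded algebra of a proper $\mathbb{N}$-filtration preserves transcendence degree, one obtains $\dim R_K=1$. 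On the other hand the image $A_K=\phi_K(R_K)$ is a $K$-domain containing the positive-degree homogeneous element $\overline{s_1}$, which is transcendental over the degree-zero field $K$ (any polynomial relation over $K$ would, on taking homogeneous components, force $c\,\overline{s_1}^{\,d}=0$ with $c\in K^\times$, impossible in a domain); hence $\dim A_K\ge 1$. Since $A_K\cong R_K/\ker\phi_K$ and $\dim R_K=1$, we get $\dim A_K=1=\dim R_K$, so the prime $\ker\phi_K$ has height $0$ and therefore vanishes.

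Therefore $\phi_K$, hence $\phi$, is injective, and by the reduction above $\mathcal{F}=\mathcal{G}$. The main obstacle is precisely the act of packaging properness as the single statement ``$\mathrm{Gr}_{\mathcal{G}}(B)$ is a domain of full transcendence degree'': once this is in place, the comparison map $\phi$ has the finitely generated one-dimensional domain $R_K$ as source and a one-dimensional image, so its kernel is a height-zero prime and collapses to zero. The remaining ingredients (the slice formula for the transcendence degree of $\ker\partial$, the transcendence of positive-degree homogeneous elements over the degree-zero part, and the dimension formula for finitely generated domains over a field) are standard, and the subtlety they help to overcome is that $\omega$ and $\deg_{\partial}$ agree on monomials but could a priori differ on sums through cancellation of leading forms; the domain hypothesis is exactly what rules this out.
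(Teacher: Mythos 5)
Your proof is correct, but it takes a genuinely different route from the paper's. The paper's converse is a short direct computation with degree functions: given $f$ with $\deg_{\partial}(f)=i$, it invokes Makar-Limanov's local-slice expansion (Lemma 4 of \cite{key-Makar}) to write $f_{0}f=a_{i}s^{i}+a_{i-1}s^{i-1}+\cdots+a_{0}$ with $f_{0}\neq0$, $a_{i}\neq0$ and all coefficients in $\ker\partial$; since properness makes $\omega$ a genuine degree function and the terms $a_{j}s^{j}$ have pairwise distinct $\omega$-values, one reads off $\omega(f)=\omega(f_{0}f)=i$, hence $f\in\mathcal{G}_{i}$. You instead build the canonical graded comparison homomorphism $\phi:\mathrm{Gr}_{\mathcal{G}}(B)\rightarrow\mathrm{Gr}_{\partial}(B)$, correctly identify the equality $\mathcal{F}=\mathcal{G}$ with injectivity of $\phi$, and kill $\ker\phi$ by a dimension count over $K=\mathrm{Frac}(\ker\partial)$; the localization step is sound because properness makes $\mathrm{Gr}_{\mathcal{G}}(B)$ a domain, so nonzero degree-zero elements are non-zero-divisors. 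The paper's argument is shorter and needs only the one outside input; yours costs more machinery ($\mathrm{tr.deg}_{k}B=\mathrm{tr.deg}_{k}\ker\partial+1$, dimension theory of affine domains) but is more structural, exhibiting $\mathrm{Gr}_{\mathcal{G}}(B)$ as mapping isomorphically onto the subalgebra of leading forms in $\mathrm{Gr}_{\partial}(B)$ --- very much in the spirit of the Kaliman--Makar-Limanov results behind Propositions \ref{Pro:when-it-is-proper} and \ref{prop:graded algebra}.

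One caution, which is a fillable weak point rather than a gap. Your blanket appeal to the ``standard fact'' that passing to the associated graded algebra of a proper $\mathbb{N}$-filtration preserves transcendence degree asserts more than you need, and as a statement about arbitrary proper filtrations the inequality $\mathrm{tr.deg}\,\mathrm{Gr}(B)\geq\mathrm{tr.deg}\,B$ is an Abhyankar-type claim that should not be waved through. Fortunately, your argument only uses the upper bound $\mathrm{tr.deg}_{k}\mathrm{Gr}_{\mathcal{G}}(B)\leq\mathrm{tr.deg}_{k}B$ (giving $\dim R_{K}\leq1$), since the lower bound $\dim A_{K}\geq1$ is proved separately by your homogeneity argument; and that upper bound has an elementary leading-form proof: if homogeneous elements $\mathrm{gr}_{\mathcal{G}}(b_{1}),\ldots,\mathrm{gr}_{\mathcal{G}}(b_{t})$ together with elements of $\ker\partial$ were algebraically independent while their lifts were not, then taking the top weight-homogeneous part of a vanishing relation and using multiplicativity of $\mathrm{gr}_{\mathcal{G}}$ (this is exactly where properness enters) would produce a nontrivial relation among the leading forms. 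With that substitution made explicit, your proof is complete.
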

\begin{proof}
One direction is clear since $\mathcal{F}$ is proper. Conversely,
suppose that $\mathcal{G}$ is proper with the corresponding $\mathbb{N}$-degree
function $\omega$ on $B$ (see \S \ref{sub:Filtration-and-the}).
Given $f\in\mathcal{F}_{i}\setminus\mathcal{F}_{i-1}$, $i>1$, for
every local slice $s\in\mathcal{F}_{1}\setminus\mathcal{F}_{0}$,
there\textit{\emph{ exist $f_{0}\neq0,a_{i}\neq0,a_{i-1},\ldots,a_{0}\in\mathcal{F}_{0}$
such that}} $f_{0}f=a_{i}s^{i}+a_{i-1}s^{i-1}+\cdots+a_{0}$ ( see
the proof of Lemma 4 in \cite{key-Makar}). Since $\omega(g)=0$ (resp.
$\omega(g)=1$) for every $g\in\mathcal{F}_{0}$ (resp. $g\in\mathcal{F}_{1}\setminus\mathcal{F}_{0}$),
we obtain 
\[
\omega(f)=\omega(f_{0}f)=\omega(a_{i}s^{i}+a_{i-1}s^{i-1}+\cdots+a_{0})=\max\{\omega(a_{i}s^{i})\}=i,
\]
 and so $f\in\mathcal{G}_{i}$.
\end{proof}
Next, we determine the $\partial$-filtration, for a locally nilpotent
derivation $\partial$ with finitely generated kernel, by giving an
effective criterion to decide when the $\mathbb{N}$-filtration $\mathcal{G}$
defined above is proper.

Hereafter, we assume that $0\in\mathrm{Spec}(B)$ and that $\ker(\partial)$
is generated by elements $z_{i}(x_{1},\ldots,x_{n})\in B$ such that
$z_{i}(0,\ldots,0)=0$, $i\in\{1,\ldots,r\}$, which is always possible
since $k\subset\ker\partial$. Since $\ker(\partial)$ is finitely
generated $k$-algebra, the plinth ideal \textit{\emph{$\mathrm{pl}(\partial)$
is finitely generated. So there exist $s_{1}(x_{1},\ldots,x_{n})$,
$\ldots$, $s_{m}(x_{1},\ldots,x_{n})$ $\in\mathcal{F}_{1}$ such
that }}$\mathcal{F}_{1}=\mathcal{F}_{0}s_{1}+\ldots+\mathcal{F}_{0}s_{m}+\mathcal{F}_{0}$.
We can also assume that $s_{i}(0,\ldots,0)=0$. 

Letting $J\subset k^{[r+n+m]}=k[Z_{1},\ldots,Z_{r}][X_{1},\ldots,X_{n}][S_{1},\ldots,S_{m}]$
be the ideal generated by $I$ and the elements $Z_{1}=z_{1}(X_{1},\ldots,X_{n}),\ldots,Z_{r}=z_{r}(x_{1},\ldots,x_{n})$,
$S_{1}=s_{1}(X_{1},\ldots,X_{n}),\ldots,S_{m}=s_{m}(x_{1},\ldots,x_{n})$,
then we have 
\[
B=k[Z_{1},\ldots,Z_{r}][X_{1},\ldots,X_{n}][S_{1},\ldots,S_{m}]/J.
\]
Note that by construction $\underset{r+n+m\mathrm{\, times}}{(0,\ldots,0)}\in\mathrm{Spec}(k^{[r+n+m]}/J)$. 

We define an $\mathbb{N}$-degree function $\omega$ on $k^{[r+n+m]}$
by declaring that $\omega(Z_{i})=0=\deg_{\partial}(z_{i})$ for all
$i\in\{1,\ldots,r\}$, $\omega(S_{i})=1=\deg_{\partial}(s_{i})$ for
all $i\in\{1,\ldots,m\}$, and $\omega(X_{i})=\deg_{\partial}(x_{i})=d_{i}$
for all $i\in\{1,\ldots,n\}$. The corresponding proper $\mathbb{N}$-filtration
$\mathcal{Q}_{i}:=\{P\in k^{[n]}\mid\omega(P)\leq i\}$, $i\in\mathbb{N}$,
on $k^{[r+n+m]}$ has the form $\mathcal{\mathcal{Q}}_{i}=\oplus_{j\leq i}\mathcal{H}_{j}$
where 
\[
\mathcal{H}_{j}:=\oplus_{\sum_{j}u_{j}+\sum_{i}d_{i}v_{i}=j}k[Z_{1},\ldots,Z_{r}]S_{1}^{u_{1}}\ldots S_{m}^{u_{m}}X_{1}^{v_{1}}\ldots X_{n}^{v_{n}}.
\]
 By construction $\pi\left(\mathcal{\mathcal{Q}}_{i}\right)=\mathcal{G}_{i}$
where $\pi:k^{[r+n+m]}\longrightarrow B$ denotes the natural projection.
Indeed, since 
\[
\pi\left(\mathcal{\mathcal{Q}}_{i}\right)=\sum_{j\leq i}\pi\left(\mathcal{H}_{j}\right)
\]
 and $\pi\left(\mathcal{H}_{j}\right)=\sum_{\sum_{j}u_{j}+\sum_{i}d_{i}v_{i}=j}k[z_{1},\ldots,z_{r}]s_{1}^{u_{1}}\ldots.s_{m}^{u_{m}}x_{1}^{v_{1}}\ldots x_{n}^{v_{n}}$,
we get 
\[
\pi\left(\mathcal{H}_{j}\right)=\sum_{\sum_{j}u_{j}+\sum_{i}d_{i}v_{i}=j}\left(\ker\partial\right)s_{1}^{u_{1}}.\ldots s_{m}^{u_{m}}x_{1}^{v_{1}}.\ldots x_{n}^{v_{n}}
\]
 which means precisely that $\pi\left(\mathcal{\mathcal{Q}}_{i}\right)=\mathcal{G}_{i}$.

Let $\hat{J}\subset k^{[r+n+m]}$ be the homogeneous ideal generated
by the highest homogeneous components relative to $\omega$ of all
elements in $J$. Then we have the following result, which is inspired
by the technique developed by S. Kaliman and L. Makar-Limanov:
\begin{prop}
\label{Pro:when-it-is-proper} The $\mathbb{N}$-filration $\mathcal{G}$
is proper if and only if $\hat{J}$ is prime.\end{prop}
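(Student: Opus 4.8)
The plan is to translate the properness of $\mathcal{G}$ into a statement about the ring $\mathrm{Gr}_{\mathcal{G}}(B)$, and then identify that graded ring with $k^{[r+n+m]}/\hat{J}$. First I would recall that an $\mathbb{N}$-filtration on a domain is proper precisely when the associated graded algebra is itself an integral domain: this is the content of property 4 in Definition \ref{def.proper.filtration}, which says that the leading forms of nonzero elements multiply to a nonzero leading form, i.e.\ that $\mathrm{Gr}(B)$ has no zero-divisors. Since $\hat{J}$ is prime if and only if $k^{[r+n+m]}/\hat{J}$ is a domain, the whole proposition reduces to exhibiting a graded $k$-algebra isomorphism
\[
\mathrm{Gr}_{\mathcal{G}}(B)\;\cong\;k^{[r+n+m]}/\hat{J}.
\]
Once this isomorphism is in place, the equivalence is immediate: $\mathcal{G}$ proper $\Leftrightarrow$ $\mathrm{Gr}_{\mathcal{G}}(B)$ a domain $\Leftrightarrow$ $k^{[r+n+m]}/\hat{J}$ a domain $\Leftrightarrow$ $\hat{J}$ prime.

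**Building the isomorphism.** To produce this isomorphism I would use the presentation $B=k^{[r+n+m]}/J$ together with the fact, established just before the statement, that $\pi(\mathcal{Q}_i)=\mathcal{G}_i$, where $\pi\colon k^{[r+n+m]}\to B$ is the natural projection and $\mathcal{Q}=\{\mathcal{Q}_i\}$ is the proper $\omega$-filtration on the polynomial ring. Because $\pi$ is filtered and surjective, it induces a surjective homomorphism of associated graded algebras
\[
\overline{\pi}\colon \mathrm{Gr}_{\mathcal{Q}}\bigl(k^{[r+n+m]}\bigr)\longrightarrow \mathrm{Gr}_{\mathcal{G}}(B).
\]
Now $\mathrm{Gr}_{\mathcal{Q}}(k^{[r+n+m]})$ is canonically identified with $k^{[r+n+m]}$ itself (the polynomial ring graded by $\omega$), since $\mathcal{Q}_i=\oplus_{j\le i}\mathcal{H}_j$ splits as a direct sum of the $\omega$-homogeneous pieces. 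The task is therefore to compute the kernel of $\overline{\pi}$ and to show it equals $\hat{J}$. The inclusion $\hat{J}\subseteq\ker\overline{\pi}$ is routine: if $P\in J$ has highest $\omega$-component $\hat P$ of degree $j$, then $\pi(P)=0$ in $B$ forces $\pi(P)\in\mathcal{G}_{j-1}$, so the leading form $\overline{\pi}(\hat P)$ vanishes in $\mathcal{G}_j/\mathcal{G}_{j-1}$.

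**The main obstacle: the reverse inclusion.** The hard part will be the opposite inclusion $\ker\overline{\pi}\subseteq\hat{J}$, which is exactly the subtlety in Kaliman–Makar-Limanov's leading-term arguments. Given an $\omega$-homogeneous $Q\in k^{[r+n+m]}$ of degree $j$ with $\overline{\pi}(Q)=0$, one knows $\pi(Q)\in\mathcal{G}_{j-1}$, i.e.\ $\pi(Q)=\pi(Q')$ for some $Q'$ with $\omega(Q')<j$; thus $Q-Q'\in J$. I would then argue that $Q$ is the highest $\omega$-component of the element $Q-Q'\in J$: since $\omega(Q')<\omega(Q)=j$, the degree-$j$ homogeneous component of $Q-Q'$ is precisely $Q$, whence $Q\in\hat{J}$ by definition of $\hat{J}$ as the ideal generated by highest homogeneous components of elements of $J$. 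The delicate point to verify carefully is that $\hat{J}$, defined as the ideal generated by the leading forms, actually contains every individual leading form obtained this way — that is, that no cancellation among leading forms of generators of $J$ produces an element of $\ker\overline{\pi}$ lying outside $\hat{J}$; but this is automatic from the definition, since $\hat{J}$ by construction collects all such highest components. With both inclusions established, $\ker\overline{\pi}=\hat{J}$, the isomorphism $\mathrm{Gr}_{\mathcal{G}}(B)\cong k^{[r+n+m]}/\hat{J}$ follows from the first isomorphism theorem for graded rings, and the proposition is proved.
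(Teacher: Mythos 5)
Your proof is correct, but it follows a genuinely different route from the paper's. The paper reduces the statement to Lemma 3.2 of \cite{key-Kaliman Makar}, which asserts that the pushed-forward semi-degree function $\omega_{B}(p)=\min_{P\in\pi^{-1}(p)}\omega(P)$ is an honest $\mathbb{N}$-degree function if and only if $\hat{J}$ is prime; the actual work there is to identify $\mathcal{G}$ with the filtration of $\omega_{B}$, and that identification uses the derivation itself (e.g.\ $\omega_{B}(s_{i})=1$ since otherwise $s_{i}\in\ker\partial$, and $\omega_{B}(x_{i})=d_{i}$ since otherwise $x_{i}$ would lie in $\mathcal{G}_{d_{i}-1}\subset\mathcal{F}_{d_{i}-1}$, forcing $\deg_{\partial}(x_{i})<d_{i}$). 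You instead work purely at the level of filtered and graded algebra: you show $\ker\overline{\pi}=\hat{J}$, hence $\mathrm{Gr}_{\mathcal{G}}(B)\cong k^{[r+n+m]}/\hat{J}$ as graded rings, and then use the chain of equivalences ``$\mathcal{G}$ proper $\Leftrightarrow$ $\mathrm{Gr}_{\mathcal{G}}(B)$ a domain $\Leftrightarrow$ $\hat{J}$ prime.'' This is in essence a self-contained re-proof of Prop.\ 4.1 of \cite{key-Kaliman Makar}, which the paper cites separately to obtain Proposition \ref{prop:graded algebra}; since your isomorphism holds whether or not $\mathcal{G}$ is proper, your argument delivers both propositions at once, and it uses nothing about $\partial$ beyond the already-established equality $\pi(\mathcal{Q}_{i})=\mathcal{G}_{i}$. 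Two points you gloss over are standard but worth making explicit: the ring structure on $\oplus_{i}\mathcal{G}_{i}/\mathcal{G}_{i-1}$ must be defined without assuming properness (only property 3 of Definition \ref{def.proper.filtration} is needed, whereas the paper's Definition \ref{Def.Gralg} assumes a proper filtration), and the passage from ``no nonzero homogeneous zero-divisors'' (which is literally what property 4 gives) to ``domain'' requires the usual leading-component argument for $\mathbb{N}$-graded rings. Neither is a gap; what the paper's route buys is brevity given the citation, while yours buys self-containedness and a unified treatment of Propositions \ref{Pro:when-it-is-proper} and \ref{prop:graded algebra}.
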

\begin{proof}
It is enough to show that $\mathcal{G}=\{\pi\left(\mathcal{\mathcal{Q}}_{i}\right)\}_{i\in\mathbb{N}}$
coincides with the filtration corresponding to the $\mathbb{N}$-semi-degree
function $\omega_{B}$ on $B$ defined by $\omega_{B}(p):=\min_{P\in\pi^{-1}(p)}\{\omega(P)\}$.
Indeed, if so, the result will follow from Lemma 3.2 in \cite{key-Kaliman Makar}
which asserts in particular that $\omega_{B}$ is an $\mathbb{N}$-degree
function on $B$ if and only if $\hat{J}$ is prime. Let $\{\mathcal{G}_{i}^{'}\}_{i\in\mathbb{N}}$
be the filtration corresponding to $\omega_{B}$. Given $f\in\mathcal{G}_{i}^{'}$
there exists $F\in\mathcal{\mathcal{Q}}_{i}$ such that $\pi(F)=f$,
which means that $\mathcal{G}_{i}^{'}\subset\pi\left(\mathcal{\mathcal{Q}}_{i}\right)$.
Conversely, it is clear that $\omega_{B}(z_{i})=\omega(Z_{i})=0$
for all $i\in\{1,\ldots,r\}$. Furthermore $\omega_{B}(s_{i})=\omega(S_{i})=1$
for all $i\in\{1,\ldots,m\}$, for otherwise $s_{i}\in\ker\partial$
which is absurd. Finally, if $d_{i}\neq0$ and $\omega_{B}(x_{i})<\omega(X_{i})=d_{i}\neq0$,
then $x_{i}\in\pi\left(\mathcal{\mathcal{Q}}_{d_{i}-1}\right)\subset\ker\partial^{d_{i}-1}$
which implies that $\deg_{\partial}(x_{i})<d_{i}$, a contradiction.
So $\omega_{B}(x_{i})=d_{i}$. Thus $\omega_{B}(f)\leq i$ for every
$f\in\pi\left(\mathcal{\mathcal{Q}}_{i}\right)$ which means that
$\pi\left(\mathcal{\mathcal{Q}}_{i}\right)\subset\mathcal{G}_{i}^{'}$.
\end{proof}
The next Proposition, which is a reinterpretation of Prop. 4.1 in
\cite{key-Kaliman Makar}, describes in particular the associated
graded algebra $Gr_{\partial}(B)$ of the filtered algebra $\left(B,\mathcal{F}\right)$
in the case where the $\mathbb{N}$-filtration $\mathcal{G}$ is proper:
\begin{prop}
\label{prop:graded algebra}If the $\mathbb{N}$-filtration $\mathcal{G}$\textup{
}\textup{\emph{is proper then }}\textup{$Gr_{\partial}(B)\simeq k^{[r+n+m]}/\hat{J}$.} \end{prop}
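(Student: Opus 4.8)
The plan is to realize $\mathrm{Gr}_{\partial}(B)$ as the associated graded algebra of the filtered surjection $\pi\colon R:=k^{[r+n+m]}\to B$, where $R$ carries the weight filtration $\mathcal{Q}$ induced by $\omega$. First I would invoke Lemma \ref{lem:equal-if-it-is-proper}: since $\mathcal{G}$ is assumed proper, it coincides with the $\partial$-filtration $\mathcal{F}$, so that $\mathrm{Gr}_{\partial}(B)=\mathrm{Gr}_{\mathcal{G}}(B)=\oplus_{i}\mathcal{G}_{i}/\mathcal{G}_{i-1}$. It therefore suffices to produce a graded $k$-algebra isomorphism $\mathrm{Gr}_{\mathcal{G}}(B)\simeq R/\hat{J}$.

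Next I would use that $\omega$ makes $R$ a graded polynomial ring, $R=\oplus_{j}\mathcal{H}_{j}$, so that the tautological isomorphism $\mathrm{Gr}_{\mathcal{Q}}(R)=\oplus_{j}\mathcal{Q}_{j}/\mathcal{Q}_{j-1}\simeq R$, sending a class to its $\omega$-leading form, is an isomorphism of graded rings. Because $\pi(\mathcal{Q}_{i})=\mathcal{G}_{i}$ by construction, $\pi$ is a degree-preserving filtered homomorphism and hence induces a homomorphism of graded algebras $\overline{\pi}\colon R\simeq\mathrm{Gr}_{\mathcal{Q}}(R)\to\mathrm{Gr}_{\mathcal{G}}(B)$. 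Surjectivity of $\overline{\pi}$ is immediate: any homogeneous class in $\mathcal{G}_{i}/\mathcal{G}_{i-1}$ has a representative $f\in\mathcal{G}_{i}=\pi(\mathcal{Q}_{i})$, so $f=\pi(P)$ with $P\in\mathcal{Q}_{i}$, and the leading form of $P$ maps onto it.

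The heart of the argument, and the step I expect to be most delicate, is to identify $\ker\overline{\pi}$ with $\hat{J}$ under the identification $\mathrm{Gr}_{\mathcal{Q}}(R)\simeq R$. For the inclusion $\hat{J}\subseteq\ker\overline{\pi}$, I would take $g\in J$ with $\omega(g)=d$ and write $g=\hat{g}+g'$ with $\hat{g}\in\mathcal{H}_{d}$ its leading form and $\omega(g')<d$; then $\pi(\hat{g})=-\pi(g')\in\pi(\mathcal{Q}_{d-1})=\mathcal{G}_{d-1}$, so the class of $\hat{g}$ lies in $\ker\overline{\pi}$, and since the latter is an ideal it contains all of $\hat{J}$. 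For the reverse inclusion, I would take $P\in\mathcal{Q}_{i}\setminus\mathcal{Q}_{i-1}$ whose leading form $P_{i}$ satisfies $\overline{\pi}(P_{i})=0$, i.e. $\pi(P)\in\mathcal{G}_{i-1}=\pi(\mathcal{Q}_{i-1})$; choosing $Q\in\mathcal{Q}_{i-1}$ with $\pi(P)=\pi(Q)$ gives $P-Q\in J$ with $\omega(Q)<i$, so the $\omega$-leading form of $P-Q$ is exactly $P_{i}$, whence $P_{i}\in\hat{J}$.

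Combining these, $\ker\overline{\pi}=\hat{J}$, and the first isomorphism theorem yields $\mathrm{Gr}_{\mathcal{G}}(B)\simeq R/\hat{J}=k^{[r+n+m]}/\hat{J}$; together with the first paragraph this gives $\mathrm{Gr}_{\partial}(B)\simeq k^{[r+n+m]}/\hat{J}$. The only place where properness of $\mathcal{G}$ is genuinely used is the initial reduction $\mathcal{F}=\mathcal{G}$; consistently, properness also guarantees via Proposition \ref{Pro:when-it-is-proper} that $\hat{J}$ is prime, so the right-hand side is indeed an integral domain, in agreement with $\mathrm{Gr}_{\partial}(B)$ being a domain.
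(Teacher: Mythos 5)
Your proof is correct, but it follows a genuinely different route from the paper's. The paper disposes of the statement in two lines: it cites Prop.~4.1 of Kaliman and Makar-Limanov \cite{key-Kaliman Makar}, which asserts precisely that the graded algebra associated to the filtered algebra $\left(B,\mathcal{G}\right)$ is isomorphic to $k^{[r+n+m]}/\hat{J}$, and then invokes Lemma \ref{lem:equal-if-it-is-proper} to identify $\mathcal{G}$ with $\mathcal{F}$ --- this last reduction is exactly your first paragraph. What you do differently is to replace the citation by a complete argument: you identify $k^{[r+n+m]}$ with $\mathrm{Gr}_{\mathcal{Q}}(k^{[r+n+m]})$ via the $\omega$-grading, push this through the filtered surjection $\pi$ (using $\pi(\mathcal{Q}_{i})=\mathcal{G}_{i}$) to obtain a graded surjection $\overline{\pi}$ onto $\mathrm{Gr}_{\mathcal{G}}(B)$, and compute $\ker\overline{\pi}=\hat{J}$ by the two leading-form arguments; both inclusions are sound, and the reverse one correctly exploits the fact that $\ker\overline{\pi}$ is a homogeneous ideal, so that it suffices to treat homogeneous elements. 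In effect you have reproved the Kaliman--Makar-Limanov proposition in this special case. Your version buys self-containedness, and it makes visible a point the paper leaves implicit: properness of $\mathcal{G}$ is used only for the identification $\mathcal{F}=\mathcal{G}$, since the graded-algebra isomorphism $\mathrm{Gr}_{\mathcal{G}}(B)\simeq k^{[r+n+m]}/\hat{J}$ holds for the $\mathbb{N}$-filtration $\mathcal{G}$ whether or not it is proper (properness being equivalent, via Proposition \ref{Pro:when-it-is-proper}, to $\hat{J}$ being prime, i.e.\ to the right-hand side being a domain). What the paper's version buys is brevity and an explicit pointer to the general result of which this proposition is an instance.
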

\begin{proof}
By virtue of $($ Prop. 4.1 in \cite{key-Kaliman Makar}$)$ the graded
algebra associated to the filtered algebra $\left(B,\mathcal{G}\right)$
is isomorphic to $k^{[r+n+m]}/\hat{J}$. So the assertion follows
from Lemma \ref{lem:equal-if-it-is-proper}. 
\end{proof}

\section{\textbf{Semi-Rigid and Rigid $k$-Domains}}

In \cite{key-David Maubach} D. Finston and S. Maubach considered
rings $B$ whose sets of locally nilpotent derivations are \textquotedblleft{}one-dimensional\textquotedblright{}
in the sense that $\mathrm{LND}(B)=\ker(\partial).\partial$ for some
non-zero $\partial\in\mathrm{LND}(B)$. They called them almost-rigid
rings. Hereafter, we consider the following definition which seems
more natural in our context (see Prop. \ref{Pro:alri-rigid one dimensional}
below for a comparison between the two notions). 
\begin{defn}
\label{def almost-rigid} A commutative domain $B$ over a field $k$
of characteristic zero is called \emph{semi-rigid} if all non-zero
locally nilpotent derivations of $B$ induce the same \textit{\emph{proper
$\mathbb{N}$-filtration (equivalently, the same $\mathbb{N}$-degree
function).}} 
\end{defn}
Recall that the\emph{ Makar-Limanov invariant} of a commutative $k$-domain
$B$ over a field $k$ of characteristic zero is defined by 
\[
\mathrm{ML}(B):=\cap_{D\in\mathrm{\mathrm{LND}}(B)}\ker(D).
\]
In particular, $B$ is semi-rigid if and only if $\mathrm{ML}(B)=\ker(\partial)$
for any non-zero $\partial\in\mathrm{LND}(B)$. Indeed, given $D,E\in\mathrm{LND}(B)\setminus\{0\}$
such that $A:=\ker(D)=\ker(E)$, there exist non-zero elements $a,b\in A$
such that $aD=bE$ (see \cite{key-gene} Principle 12) which implies
that the $D$-filtration is equal to the $E$-filtration. So if $\mathrm{ML}(B)=\ker(\partial)$
for any non-zero $\partial\in\mathrm{LND}(B)$ then $B$ is semi-rigid.
The other implication is clear by definition. 

Recall that $D\in\mathrm{Der}_{k}(B)$ is \textit{irreducible} if
and only if $D(B)$ is contained in no proper principal ideal of $B$,
and that $B$ is said to satisfy the ascending chain condition (ACC)
on principal ideals if and only if every infinite chain $(b_{1})\subset(b_{2})\subset(b_{3})\subset\cdots$
of principal ideals of $B$ eventually stabilizes. $B$ is said to
be a \textit{highest common factor ring}, or HCF-ring, if and only
if the intersection of any two principal ideals of $B$ is again principal.
\begin{prop}
\label{Pro:alri-rigid one dimensional} Let $B$ be a semi-rigid $k$-domain
satisfying the ACC on principal ideals. If $\mathrm{ML}(B)$ is an
HCF-ring, then there exists a unique irreducible $\partial\in\mathrm{LND}(B)$
up to multiplication by unit.\end{prop}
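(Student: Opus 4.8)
The plan is to produce one irreducible locally nilpotent derivation canonically from any given nonzero $\partial$, and then use semi-rigidity to force uniqueness up to a unit. First I would take an arbitrary nonzero $\partial\in\mathrm{LND}(B)$ and reduce it to an irreducible one. The standard device is this: if $D(B)$ is contained in a proper principal ideal, say $D=b\cdot D'$ for some $b\in B$ and $D'\in\mathrm{Der}_k(B)$, then $D'$ is again a $k$-derivation, and it is locally nilpotent because $\ker D=\ker D'$ (dividing out a factor does not change the kernel, and local nilpotence is detected on $\ker D$ together with the filtration). This produces a strictly increasing chain of principal ideals $(b_1)\subsetneq(b_2)\subsetneq\cdots$ of the elements we factor out, and here is where the ACC on principal ideals enters: the process must terminate, leaving an irreducible $\partial_0\in\mathrm{LND}(B)$ with $\ker\partial_0=\ker\partial$. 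So existence of \emph{some} irreducible LND is immediate from ACC alone.

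The real content is uniqueness, and this is where semi-rigidity and the HCF hypothesis combine. Suppose $\partial_1,\partial_2\in\mathrm{LND}(B)$ are both irreducible. By semi-rigidity they induce the same $\partial$-filtration, hence $\ker\partial_1=\mathrm{ML}(B)=\ker\partial_2=:A$. As quoted in the excerpt (via Principle 12 of \cite{key-gene}), two nonzero LNDs with the same kernel satisfy $a\partial_1=b\partial_2$ for some nonzero $a,b\in A$. The goal is to show $a$ and $b$ are units, which would give $\partial_1=(b/a)\partial_2$ with $b/a$ a unit of $B$ lying in $A$, as desired.

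I would extract this from irreducibility as follows. From $a\partial_1=b\partial_2$ we get $b\partial_2(B)=a\partial_1(B)\subset(a)$ and symmetrically $a\partial_1(B)\subset(b)$, so $a\partial_1(B)\subset(a)\cap(b)$. Here the HCF hypothesis on $A=\mathrm{ML}(B)$ is meant to control the intersection $(a)\cap(b)$: writing $c$ for a generator of $(a)\cap(b)$ inside $A$ (and hence, one checks, generating the relevant intersection inside $B$), I would argue that cancelling the common factor $c$ from the relation $a\partial_1=b\partial_2$ yields a relation $a'\partial_1=b'\partial_2$ in which $a'\partial_1(B)$ is no longer contained in the principal ideal generated by the common part — forcing, by irreducibility of both $\partial_1$ and $\partial_2$, that $a'$ and $b'$ are units. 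Tracing back, $a/b$ is then a unit.

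The main obstacle I anticipate is precisely this last cancellation step: the HCF condition is stated for $A=\mathrm{ML}(B)$, whereas irreducibility is a condition about principal ideals of the larger ring $B$. The delicate point is to transfer the gcd/lcm information from $A$ to $B$, i.e.\ to verify that a generator of $(a)\cap(b)$ in $A$ still captures the obstruction to irreducibility measured in $B$, and that after removing it the derivations $a'\partial_1$ and $b'\partial_2$ genuinely have images in no proper principal ideal of $B$. Making that comparison rigorous — rather than the formal bookkeeping of chains, which ACC handles cleanly — is where the proof will have to do its real work.
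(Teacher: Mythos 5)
Your overall skeleton coincides with the paper's (existence from ACC via Freudenburg's Prop.~2.2 and Principle~7; uniqueness from Principle~12 together with the HCF hypothesis on $A=\mathrm{ML}(B)$), but the uniqueness argument has a genuine gap at exactly the step you flag, and the way you propose to close it cannot work as stated. Irreducibility of $\partial_1$ is a statement about factorizations of $\partial_1$ \emph{itself}: it can only be invoked after you exhibit $\partial_1=s\,T_0$ with $T_0$ a derivation of $B$ and $s\in B$; it says nothing about a relation $a'\partial_1=b'\partial_2$, since $a'\partial_1(B)\subset a'B$ holds automatically and harmlessly for every $a'$. (Also, the goal ``show $a$ and $b$ are units'' is not the right goal: $a=b$ a non-unit with $\partial_1=\partial_2$ is perfectly possible; what is needed is that the ratio is a unit, or, after cancelling the gcd, that the resulting coprime coefficients are units.) The paper supplies precisely the mechanism you are missing: set $T=a\partial_1=b\partial_2$, let $c\in A$ generate $aA\cap bA$, and from $T(B)\subset cB$ produce a \emph{third} locally nilpotent derivation $T_0$ with $T=cT_0$. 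Writing $c=as=bt$, cancellation in the domain $B$ turns $a\partial_1=asT_0$ into $\partial_1=sT_0$ and $b\partial_2=btT_0$ into $\partial_2=tT_0$; only now does irreducibility of $\partial_1$ and $\partial_2$ have something to act on, forcing $s,t$ to be units, so that $\partial_1$ and $\partial_2$ are unit multiples of $T_0$ and hence of each other.

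On the positive side, you correctly located the delicate point: transferring divisibility information from $A$ to $B$. That is the content of the paper's tersely justified assertion $T(B)\subset cB$ (the containment $T(B)\subset aB\cap bB$ is clear; the inclusion $aB\cap bB\subset cB$ is the $A$-to-$B$ transfer, which is where factorial closedness of $A=\ker\partial_1$ must enter --- for instance, on elements $x$ with $\deg_{\partial_1}(x)\leq 1$ one has $T(x)\in A$, whence $T(x)\in aB\cap bB\cap A=aA\cap bA=cA$). So your instinct about where the real work lies is sound; what the proposal lacks is the construction of $T_0$, without which the irreducibility hypothesis is inert. A separate, smaller inaccuracy: in the existence part, the factored-out elements $b_1,b_2,\dots$ need not generate a chain of ideals at all; the ascending chain to which ACC is applied is, say, $\left(D(x)\right)\subsetneq\left(D_1(x)\right)\subsetneq\cdots$ for a fixed $x$ with $D(x)\neq 0$, where $D_n=D/(b_1\cdots b_n)$, and local nilpotency of each $D_n$ follows from $b_i\in\ker D$ (Principle~7), not merely from equality of kernels. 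The paper sidesteps all of this by citing Freudenburg directly.
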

\begin{proof}
Existence: since $B$ is satisfies the ACC on principal ideals, then
for every non-zero $T\in\mathrm{LND}(B)$, there exists an irreducible
$T_{0}\in\mathrm{LND}(B)$ and $c\in\ker(T)$ such that $T=cT_{0}$.
(see \cite{key-gene}, Prop. 2.2 and Principle 7). 

Uniqueness: the following argument is similar to that in \cite{key-gene}
Prop. 2.2.b, but with a little difference, that is, in \cite{key-gene}
it is assumed that $B$ itself is an HCF-ring while here we only require
that $\mathrm{ML}(B)$ is an HCF-ring.

Let $D,E\in\mathrm{LND}(B)$ be irreducible derivations, and denote
$A=\mathrm{ML}(B)$. By hypothesis $\ker(D)=\ker(E)=A$, so there
exist non-zero $a,b\in A$ such that $aD=bE$ (see \cite{key-gene}
Principle 12). Here we can assume that $a,b$ are not units otherwise
we are done. Set $T=aD=bE$. Since $A$ is an HCF-ring, there exists
$c\in A$ such that $aA\cap bA=cA$. Therefore, $T(B)\subset cB$,
and there exists $T_{0}\in\mathrm{LND}(B)$ such that $T=cT_{0}$.
Write $c=as=bt$ for $s,t\in B$. Then $cT_{0}=asT_{0}=aD$ implies
$D=sT_{0}$, and likewise $E=tT_{0}$. By irreducibility, $s$ and
$t$ are units of $B$, and we are done.
\end{proof}
In particular, for a ring $B$ as in \ref{Pro:alri-rigid one dimensional},
every $D\in\mathrm{LND}(B)$ has the form $D=f\partial$ for some
irreducible $\partial\in\mathrm{LND}(B)$ and $f\in\ker(\partial)$,
and so $B$ is almost rigid in the sense introduced by Finston and
Maubach.

Recall that a ring $A$ is called \textit{rigid} if the zero derivation
is the only locally nilpotent derivation of $A$. Equivalently, $A$
is rigid if and only if $\mathrm{ML}(A)=A$. It is well known that
the only non rigid $k$-domains of transcendence degree one are polynomial
rings in one variable over algebraic extensions of $k$ (\cite{key-gene}
Corollary 1.24 and Corollary 1.29). In particular, we have the following
elementary criterion for rigidity that we will use frequently in the
sequel:
\begin{lem}
\label{rem rigid example} A domain $B$ of transcendence degree one
over a field $k$ of characteristic zero is rigid if one of the following
properties hold:

\emph{(1)} \textbf{$B$} is not factorial.

\emph{(2)} $\mathrm{Spec}(B)$ has a singular point.
\end{lem}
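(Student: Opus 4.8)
The plan is to argue by contraposition, leaning entirely on the structural classification recalled just above: a transcendence-degree-one $k$-domain $B$ (with $\mathrm{char}\,k=0$) that is \emph{not} rigid must be isomorphic to a polynomial ring $k'[t]$ in one variable over an algebraic extension $k'$ of $k$ (\cite{key-gene}, Corollaries 1.24 and 1.29). Thus it suffices to check that such a polynomial ring is simultaneously factorial and non-singular; the failure of either property then forces $B$ to be rigid, establishing (1) and (2) in one stroke.

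First I would handle (1). If $B$ is not rigid, then $B\simeq k'[t]$, which is a principal ideal domain and hence a unique factorization domain, so $B$ is factorial. Contrapositively, if $B$ is not factorial then $B$ cannot be non-rigid, i.e. it is rigid.

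Next I would handle (2). Again assuming $B$ non-rigid, write $B\simeq k'[t]$. Since $\mathrm{char}\,k=0$, the algebraic extension $k'$ is a perfect field, so regularity and smoothness of $k'[t]$ over $k'$ coincide; and because $k'[t]$ is a PID, each of its localizations is a field or a discrete valuation ring, hence regular. Therefore $\mathrm{Spec}(k'[t])\simeq\mathbb{A}^1_{k'}$ has no singular point. Contrapositively, if $\mathrm{Spec}(B)$ has a singular point then $B$ cannot be non-rigid, i.e. it is rigid.

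I expect no serious obstacle, as the cited classification does essentially all the work. The only point requiring a little care is to match the intended meaning of ``singular point'' (a prime at which the local ring fails to be regular) with the regularity of $k'[t]$, and to observe that over the perfect field $k'$ this regularity is genuine smoothness, regardless of whether $k'$ happens to be algebraically closed.
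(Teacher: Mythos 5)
Your proposal is correct and follows exactly the route the paper intends: the paper gives no separate proof, presenting the lemma as an immediate consequence of the classification it has just cited (\cite{key-gene}, Corollaries 1.24 and 1.29) that non-rigid $k$-domains of transcendence degree one are polynomial rings $k'[t]$ over algebraic extensions $k'$ of $k$. Your contrapositive argument — such a ring is a PID, hence factorial, and regular, hence without singular points — is precisely the fleshing-out of that implicit reasoning.
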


\subsection{Elementary examples of semi-rigid $k$-domains}

\indent\newline\noindent  The next Proposition, which is due to Makar-Limanov
(\cite{key-Makar} Lemma 21, also \cite{key-Crachiola Makar} Theorem
3.1), presents some of the simplest examples of semi-rigid $k$-domains.
\begin{prop}
\textbf{\emph{$($Makar-Limanov$)$}} Let $A$ be a rigid domain of
finite transcendence degree over a field $k$ of characteristic zero.
Then the polynomial ring $A[x]$ is semi-rigid.\end{prop}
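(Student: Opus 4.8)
The plan is to invoke the criterion recalled just after Definition~\ref{def almost-rigid}: a domain is semi-rigid exactly when all its nonzero locally nilpotent derivations share one kernel. Since the partial derivative $\partial=\partial/\partial x\in\mathrm{LND}(A[x])$ is nonzero with $\ker\partial=A$, it therefore suffices to show that every nonzero $D\in\mathrm{LND}(A[x])$ has $\ker D=A$. Throughout I would use two standard facts (see \cite{key-gene}): a nonzero locally nilpotent derivation of a domain of finite transcendence degree has a kernel of transcendence degree one less, so both $\ker D$ and $A$ have transcendence degree $n:=\mathrm{tr.deg}_{k}A$; and the kernel of such a derivation is factorially, hence algebraically, closed in the ambient domain.

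First I would pass to the associated graded algebra. The $\partial$-filtration of $\partial/\partial x$ is precisely the filtration of $A[x]$ by degree in $x$, and $\mathrm{Gr}_{\partial}(A[x])$ is canonically the polynomial ring $A[x]$ with its standard $\mathbb{N}$-grading, $A$ in degree $0$ and $x$ in degree $1$. By Proposition~\ref{Daigle}, $D$ induces a nonzero homogeneous locally nilpotent derivation $\overline{D}$ of $A[x]$ of some degree $d\in\mathbb{Z}$. Unwinding homogeneity, $\overline{D}(a)=\phi(a)x^{d}$ for $a\in A$ with $\phi\in\mathrm{Der}_{k}(A)$, and $\overline{D}(x)=c\,x^{d+1}$ for some $c\in A$; when $d<0$ the first identity forces $\overline{D}|_{A}=0$ at once.

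The heart of the argument is to prove that $\overline{D}$ vanishes on $A$, and for this I would show that $\phi$ is locally nilpotent on $A$ and then invoke rigidity to conclude $\phi=0$. This is trivial for $d\le 0$, and for $d=0$ it is immediate because $\overline{D}$ then restricts to $\phi$ and $\overline{D}^{n}|_{A}=\phi^{n}$. The hard part will be the case $d>0$ with $c\neq 0$: here $\overline{D}^{n}(a)=\big[(\phi+(n-1)dc)\circ\cdots\circ(\phi+dc)\circ\phi\big](a)\,x^{nd}$, a composite of non-commuting affine shifts of $\phi$, so the eventual vanishing of $\overline{D}^{n}(a)$ does not obviously yield $\phi^{m}(a)=0$. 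I would address this by passing to $K=\mathrm{Frac}(A)$, filtering the differential operators by their order in $\phi$ and comparing leading symbols to show that $\{b\in A:\phi^{m}(b)=0\text{ for some }m\}$ exhausts $A$; alternatively one can reduce to the tractable case $c=0$ by the secondary homogenization technique of \cite{key-Kaliman Makar}. Granting $\phi=0$, we obtain $A\subseteq\ker\overline{D}$.

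Finally I would descend back to $D$. As $\ker\overline{D}$ is a homogeneous subalgebra of transcendence degree $n$ containing $A$, and any nonzero homogeneous element $g\,x^{e}$ with $e\ge 1$ is transcendental over $\mathrm{Frac}(A)$ and would raise the transcendence degree to $n+1$, necessarily $\ker\overline{D}=A$. Since $\mathrm{gr}_{\partial}(\ker D)\subseteq\ker\overline{D}=A$ and $A$ is the degree-$0$ component, every $f\in\ker D$ has $\deg_{\partial}f=0$, that is $\ker D\subseteq A$. Now $\ker D\subseteq A$ with both of transcendence degree $n$ and $\ker D$ algebraically closed in $A[x]$ makes every element of $A$ algebraic over $\mathrm{Frac}(\ker D)$, hence $A\subseteq\ker D$; thus $\ker D=A$. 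As $D$ was arbitrary, all nonzero locally nilpotent derivations of $A[x]$ have kernel $A$, and $A[x]$ is semi-rigid.
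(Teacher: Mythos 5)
Your overall skeleton---reduce semi-rigidity to the claim that every nonzero $D\in\mathrm{LND}(A[x])$ has kernel $A$, pass to the graded algebra via Proposition~\ref{Daigle}, kill $\overline{D}|_{A}$ by rigidity of $A$, then descend using $\mathrm{gr}_{\partial}(\ker D)\subseteq\ker\overline{D}$ and algebraic closedness---is the same as the paper's, and your cases $d<0$, $d=0$, and $d>0$ with $c=0$ are correct. But the step you yourself call the heart of the argument, namely $d>0$ with $c\neq0$, is not proved: you only gesture at two possible techniques (``filtering the differential operators by their order in $\phi$ and comparing leading symbols'', or ``secondary homogenization''), neither of which is carried out. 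Worse, the formula on which you base that discussion is incorrect: already $\overline{D}^{3}(a)$ contains terms involving $\phi(c)$ that your composite-of-affine-shifts expression omits (it is valid only when $\phi(c)=0$). So as written the proposal is incomplete exactly at its crux.

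Ironically, the case you flag as hard is vacuous, and for an elementary reason: if $c\neq0$ then $x\notin\ker\overline{D}$, so $\deg_{\overline{D}}(\overline{D}(x))=\deg_{\overline{D}}(x)-1$, while on the other hand
\[
\deg_{\overline{D}}\bigl(c\,x^{d+1}\bigr)=\deg_{\overline{D}}(c)+(d+1)\deg_{\overline{D}}(x)\;\geq\;\deg_{\overline{D}}(x)
\]
since $d\geq0$ and $\deg_{\overline{D}}(x)\geq1$; this is absurd, hence $c=0$ always (equivalently: a locally nilpotent derivation sending $x$ into the ideal $x\,A[x]$ must annihilate $x$). The paper avoids the issue by a different ordering of the logic: instead of trying to prove $A\subseteq\ker\overline{D}$ for an arbitrary homogeneous $\overline{D}$, it starts from a hypothetical kernel element $f\in\ker D\setminus A$, observes that then $\overline{f}=a\,\overline{x}^{\,i}$ with $i\geq1$ lies in $\ker\overline{D}$, and uses factorial closedness of $\ker\overline{D}$ (\cite{key-gene}, Principle 1) to conclude $\overline{x}\in\ker\overline{D}$---that is, $c=0$---before writing $\overline{D}=a\overline{x}^{d}E$ with $E\in\mathrm{LND}(A)$ and invoking rigidity. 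Either repair (the one-line degree argument for $c=0$, or the paper's kernel-element-first ordering) would close your gap; without one of them your proposal does not constitute a proof.
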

\begin{proof}
For the convenience of the reader, we provide an argument formulated
in the $\mathrm{LND}$-filtration language. Let $\partial$ be the
locally nilpotent derivation of $A[x]$ defined by $\partial(a)=0$
for every $a\in A$ and $\partial(x)=1$. Then the $\partial$-filtration
$\{\mathcal{F}_{i}\}_{i\in\mathbb{N}}$ is given by $\mathcal{F}_{i}=Ax^{i}\oplus\mathcal{F}_{i-1}$
where $\mathcal{F}_{0}=\ker(\partial)=A$. So the associated graded
algebra is $Gr(A[x])=\oplus_{i\in\mathbb{N}}A\overline{x}^{i}$, where
$\overline{x}:=gr(x)$. By Proposition \ref{Daigle}, every non-zero
$D\in\mathrm{LND}(A[x])$ respects the $\partial$-filtration and
induces a non-zero homogeneous locally nilpotent derivation $\overline{D}$
of $\mathrm{Gr}(A[x])$ of a certain degree $d$. 

Let $f\in\ker(D)$ and assume that $f\notin A$. Then $\overline{x}$
divides $\overline{f}$. Since $\overline{f}\in\ker(\overline{D})$
and $\ker(\overline{D})$ is factorially closed (\cite{key-gene}
Principle 1), we have $\overline{x}\in\ker(\overline{D})$. Note that
$\overline{D}$ sends homogeneous elements of degree $i$ to zero
or to homogeneous elements of degree $i+d$, therefore $\overline{D}$
has the form $\overline{D}=a\overline{x}^{d}E$, where $a\in A$,
$d\in\mathbb{N}$, and $E\in\mathrm{LND}(A)$ see \cite{key-gene}
Principle 7. But since $A$ is rigid, $E=0$. Thus $\overline{D}=0$,
a contradiction. This means $f\in A$ which implies that $\ker(D)\subset A$.
Finally, since $\mathrm{tr.deg_{k}}(A)=\mathrm{tr.deg_{k}}(\ker(D))$
and $A$ is algebraically closed in $A[x]$, we get the equality $\ker(D)=A$.
Hence $\mathrm{ML}(A[x])=A$. 
\end{proof}

\section{\textbf{Computing the $\mathrm{ML}$-Invariant using $\mathrm{LND}$-Filtration }}

Here we illustrate the use of the $\partial$-filtration in the computation
of $\mathrm{ML}$-invariants, for classes of already well-studied
examples.

\subsection{Classical examples of semi-rigid $k$-domains}

\indent\newline\noindent  We first consider in \ref{ex:Daniel},
certain surfaces in $\mathbb{A}^{[3]}$ defined by equations $X^{n}Z-P(Y)$
where $n>1$ and $\deg_{Y}P(Y)>1$, which were first discussed by
Makar-Limanov in \cite{key-Makar}, where he computed their $\mathrm{ML}$-invariants.
Later on Poloni \cite{key-Poloni} used similar methods to compute
$\mathrm{ML}$-invariants for a larger class. In the second example
\ref{sub:The-second-type}, we consider certain threefolds whose invariants
were computed by S. Kaliman and L. Makar-Limanov \cite{key-sh and makar}
in the context of the linearization problem for $\mathbb{C}^{*}$-action
on $\mathbb{C}^{3}$. 

In these examples, the use of $\mathrm{LND}$-filtrations is more
natural and less tedious than other existing approaches.

\subsubsection{\label{ex:Daniel} \textbf{\emph{Danielewski hypersurfaces}}}

\indent\newline\noindent  Let 
\[
B_{n,P}=k[X,Y,Z]/\langle X^{n}Z-P(X,Y)\rangle
\]
 where 
\[
P(X,Y)=Y^{m}+f_{m-1}(X)Y^{m-1}+\cdots+f_{0}(X),
\]
 $f_{i}(X)\in k[X]$, $n\geq2$, and $m\geq2$. 

Let $x$, $y$, $z$ be the images of $X$, $Y$, $Z$ in $B_{n,P}$.
Define $\partial$ by $\partial(x)=0$, $\partial(y)=x^{n}$, $\partial(z)=\frac{\partial P}{\partial y}$
where 
\[
\frac{\partial P}{\partial y}=my^{m-1}+(m-1)f_{m-1}(x)y^{m-2}+\ldots+f_{1}(x)
\]
 We see that $\partial\in\mathrm{LND}(B_{n,P})$ with $\ker(\partial)=k[x]$,
and $y$ is a local slice for $\partial$. Moreover, we have $\deg_{\partial}(x)=0$,
$\deg_{\partial}(y)=1$, $\deg_{\partial}(z)=m$. The plinth ideal
is $\mathrm{pl}(\partial)=\left\langle x^{n}\right\rangle $. Up to
a change of variable of the form $Y\mapsto Y-c$ where $c\in k$,
we can always assume that $0\in\mathrm{Spec}(B)$. A consequence of
Lemma \ref{lem:equal-if-it-is-proper}, Proposition \ref{Pro:when-it-is-proper},
and Proposition \ref{prop:graded algebra} (see the Proof of Prop.
\ref{Prop:main-result-filtration} for more details) is that 

1- The $\partial$-filtration $\{\mathcal{F}_{i}\}_{i\in\mathbb{N}}$
is given by: 
\[
\mathcal{F}_{mi+j}=k[x]y^{j}z^{i}+\mathcal{F}_{mi+j-1}
\]
where $i\in\mathbb{N}$ and $j\in\{0,\ldots m-1\}$.

2- The associated graded algebra $\mathrm{Gr}(B_{n,P})=\oplus_{i\in\mathbb{N}}\overline{B}_{i}$,
where $\overline{B}_{i}=\mathcal{F}_{i}/\mathcal{F}_{i-1}$, is generated
by $\overline{x}=gr_{\partial}(x)$, $\overline{y}=gr_{\partial}(y)$,
$\overline{z}=gr_{\partial}(z)$ as an algebra over $k$ with relation
$\overline{x}^{n}\overline{z}=\overline{y}^{m}$, i.e. $\mathrm{Gr}(B_{n,P})=k[\overline{X},\overline{Y},\overline{Z}]/\langle\overline{X}^{n}\overline{Z}-\overline{Y}^{m}\rangle$.
And we have : 
\[
\overline{B}_{mi+j}=k[\overline{x}]\overline{y}^{j}\overline{z}^{i}
\]
where $i\in\mathbb{N}$ and $j\in\{0,\ldots m-1\}$.
\begin{prop}
\label{prop:Daniel-example} With the notation above we have: 

\emph{(1)} $\mathrm{ML}(B_{n,P})=k[x]$. Consequently $B_{n,P}$ is
semi-rigid.

\emph{(2)} Every $D\in\mathrm{LND}(B_{n,P})$ has the form $D=f(x)\partial$.\end{prop}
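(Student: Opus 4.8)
The plan is to reduce everything to the associated graded surface
\[
R:=\mathrm{Gr}(B_{n,P})=k[\overline{x},\overline{y},\overline{z}]/\langle\overline{x}^{n}\overline{z}-\overline{y}^{m}\rangle,
\]
which carries the $\mathbb{N}$-grading $\deg\overline{x}=0$, $\deg\overline{y}=1$, $\deg\overline{z}=m$ with homogeneous pieces $\overline{B}_{mi+j}=k[\overline{x}]\overline{y}^{j}\overline{z}^{i}$ for $0\le j<m$, so in particular $R_{0}=k[\overline{x}]$. I would fix a non-zero $D\in\mathrm{LND}(B_{n,P})$. Since $\mathrm{tr.deg}_{k}(B_{n,P})=2$, Proposition \ref{Daigle} shows that $D$ respects the $\partial$-filtration and induces a non-zero homogeneous $\overline{D}\in\mathrm{LND}(R)$, of some degree $d$, with $\mathrm{gr}_{\partial}(\ker D)\subseteq\ker\overline{D}$. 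The whole statement then follows from the key claim that $\ker\overline{D}=k[\overline{x}]$ for every such $\overline{D}$.

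First I would show $\overline{x}\in\ker\overline{D}$. Note that $\overline{y}\notin\ker\overline{D}$, for otherwise $\overline{y}^{m}=\overline{x}^{n}\overline{z}\in\ker\overline{D}$ and factorial closedness of $\ker\overline{D}$ (\cite{key-gene}, Principle 1) would force $\overline{x},\overline{z}\in\ker\overline{D}$, hence $\overline{D}=0$. If $d<0$ then $\overline{D}(\overline{x})\in R_{d}=0$ and there is nothing to prove. If $d=0$ then $R_{0}$ is $\overline{D}$-stable and $\overline{D}|_{R_{0}}$ is a locally nilpotent derivation of the polynomial ring $k[\overline{x}]$, so $\overline{D}(\overline{x})\in k$; writing $\overline{D}(\overline{y})=\beta(\overline{x})\overline{y}$ and $\overline{D}(\overline{z})=\gamma(\overline{x})\overline{z}$, applying $\overline{D}$ to the defining relation and cancelling $\overline{z}$ gives
\[
n\,\overline{D}(\overline{x})\,\overline{x}^{n-1}=\overline{x}^{n}\bigl(m\beta(\overline{x})-\gamma(\overline{x})\bigr),
\]
whose right-hand side is divisible by $\overline{x}^{n}$, forcing the constant $\overline{D}(\overline{x})$ to vanish. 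The remaining case $d>0$ is the technical heart and the step I expect to be the main obstacle: writing $\overline{D}(\overline{x})$, $\overline{D}(\overline{y})$, $\overline{D}(\overline{z})$ in the normal form provided by $\overline{B}_{mi+j}=k[\overline{x}]\overline{y}^{j}\overline{z}^{i}$, substituting into
\[
n\,\overline{x}^{n-1}\overline{D}(\overline{x})\,\overline{z}+\overline{x}^{n}\overline{D}(\overline{z})=m\,\overline{y}^{m-1}\overline{D}(\overline{y}),
\]
and comparing $\overline{x}$-divisibilities constrains the possible values; local nilpotence of $\overline{D}$ (tracking the strictly increasing degrees $d,2d,\dots$ of the iterates $\overline{D}^{k}(\overline{x})$) should then rule out $\overline{D}(\overline{x})\neq0$. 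This is exactly where the hypotheses $n\ge 2$ and $m\ge 2$ are used.

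Granting $\overline{x}\in\ker\overline{D}$, I would upgrade this to $\ker\overline{D}=k[\overline{x}]$. Since $\mathrm{Frac}(R)=k(\overline{x})(\overline{y})$ is purely transcendental over $k(\overline{x})$, the field $k(\overline{x})$ is algebraically closed in $\mathrm{Frac}(R)$, and as $R\cap k(\overline{x})=R_{0}=k[\overline{x}]$ the subalgebra $k[\overline{x}]$ is algebraically closed in $R$. Because $\ker\overline{D}$ is algebraically closed in $R$, of transcendence degree $1$, and contains $k[\overline{x}]$, the two have equal transcendence degree and hence coincide. Transferring back, $\mathrm{gr}_{\partial}(\ker D)\subseteq\ker\overline{D}=R_{0}$ means every $f\in\ker D$ has $\deg_{\partial}(f)=0$, i.e. $f\in\mathcal{F}_{0}=k[x]$, so $\ker D\subseteq k[x]$. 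Finally, $\ker D$ is factorially and algebraically closed in $B_{n,P}$, hence in $k[x]$, and has transcendence degree $1$, so $\mathrm{Frac}(\ker D)=k(x)$; writing any $g\in k[x]$ as $g=u/v$ with $u,v\in\ker D$ gives $gv=u\in\ker D$, whence $g\in\ker D$ by factorial closedness, so $\ker D=k[x]$. As $D$ was arbitrary and $\ker\partial=k[x]$, this yields $\mathrm{ML}(B_{n,P})=k[x]$, and $B_{n,P}$ is semi-rigid by the remark following Definition \ref{def almost-rigid}, which proves (1).

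For (2) I would invoke Proposition \ref{Pro:alri-rigid one dimensional}: $B_{n,P}$ is a finitely generated $k$-algebra, hence Noetherian and satisfying the ACC on principal ideals, and $\mathrm{ML}(B_{n,P})=k[x]$ is a PID, hence an HCF-ring. Moreover $\partial$ is irreducible, since any non-unit of $B_{n,P}$ dividing $\partial(B_{n,P})$ would divide $\partial(y)=x^{n}$, forcing it into $k[x]$ and hence divisible by $x$, yet $x$ does not divide $\partial(z)=\partial P/\partial y$, whose restriction at $x=0$ is the non-zero polynomial $m y^{m-1}+\cdots+f_{1}(0)$. Proposition \ref{Pro:alri-rigid one dimensional} then identifies $\partial$ as the unique irreducible locally nilpotent derivation up to a unit, so every $D\in\mathrm{LND}(B_{n,P})$ has the form $D=f(x)\partial$ with $f(x)\in\ker\partial=k[x]$.
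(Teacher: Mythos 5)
Your overall framework is sound and close to the paper's: reduce via Proposition \ref{Daigle} to a non-zero homogeneous $\overline{D}\in\mathrm{LND}(\mathrm{Gr}(B_{n,P}))$, prove $\ker\overline{D}=k[\overline{x}]$, transfer back to get $\ker D=k[x]$, and deduce (2) from Proposition \ref{Pro:alri-rigid one dimensional} (your explicit check that $\partial$ is irreducible is a nice addition the paper leaves implicit). But there is a genuine gap at exactly the step you flag yourself: the case $d>0$ is not a routine matter of ``comparing $\overline{x}$-divisibilities'' plus degree-tracking, and you give no actual argument for it. To see why it is the heart of the matter, note that $\ker\overline{D}$ is a graded, factorially closed subalgebra of transcendence degree $1$; if it is not contained in $k[\overline{x}]$, then since every homogeneous piece is $k[\overline{x}]\overline{y}^{j}\overline{z}^{i}$, factorial closedness forces $\overline{y}\in\ker\overline{D}$ (easy contradiction, as you show) or $\overline{z}\in\ker\overline{D}$. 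Ruling out a non-zero locally nilpotent derivation annihilating $\overline{z}$ is where all the content lies, and it is not accessible by elementary manipulation of the relation $\overline{x}^{n}\overline{z}=\overline{y}^{m}$: the paper handles precisely this case by extending $\overline{D}$ to $\widetilde{B}=k(\overline{z})[\overline{x},\overline{y}]/\langle\overline{x}^{n}\overline{z}-\overline{y}^{m}\rangle$, a domain of transcendence degree one over the field $k(\overline{z})$ whose spectrum has a singular point (here $n\geq2$, $m\geq2$ enter), and invoking Lemma \ref{rem rigid example}, i.e.\ the classification of non-rigid one-dimensional domains. Some input of this strength is unavoidable; your sketch contains no substitute for it, so as written the proof of (1) is incomplete.

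Two smaller points. First, your closing inference ``$\ker D\subseteq k[x]$ has transcendence degree $1$, so $\mathrm{Frac}(\ker D)=k(x)$'' is not valid as stated ($k[x^{2}]$ has transcendence degree $1$ but fraction field $k(x^{2})$); the correct route, which is also the paper's, is that $x$ is algebraic over $\ker D$ and $\ker D$ is algebraically (indeed factorially) closed in $B_{n,P}$, whence $x\in\ker D$. Second, your cases $d<0$ and $d=0$ are fine but ultimately unnecessary: the paper's divisibility argument on a homogeneous element $\overline{f}\in\ker\overline{D}$ treats all degrees $d$ uniformly, which is what makes its proof short.
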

\begin{proof}
(1) By Proposition \ref{Daigle} a non-zero $D\in\mathrm{LND}(B)$
induces a non-zero $\overline{D}\in\mathrm{LND}\left(\mathrm{Gr}(B)\right)$.
Let $f\in\ker(D)\setminus k$, then $\overline{f}\in\ker(\overline{D})\setminus k$.
There exists $i\in\mathbb{N}$ such that $\overline{f}\in\overline{B}_{i}$. 

Assume that $\overline{f}\notin k[\overline{x}]=\overline{B}_{0}$,
then one of the elements $\overline{y},\overline{z}$ must divide
$\overline{f}$ . Which leads to a contradiction as follows: 

If $\overline{y}$ divides $\overline{f}$ , then $\overline{y}\in\ker(\overline{D})$
because $\ker(\overline{D})$ is factorially closed in $\mathrm{Gr}(B)$
(\cite{key-gene} Principle 1). For the same reason $\overline{x},\overline{z}\in\ker(\overline{D})$
because $\overline{x}^{n}\overline{z}=\overline{y}^{m}$. So $\overline{D}=0$,
a contradiction. 

If $\overline{z}$ divides $\overline{f}$, then $\overline{D}(\overline{z})=0$.
So $\overline{D}$ extends to a locally nilpotent derivation $\widetilde{D}$
of the ring $\widetilde{B}=k(\overline{z})[\overline{x},\overline{y}]/\left\langle \overline{x}^{n}\overline{z}-\overline{y}^{m}\right\rangle $.
Since $0\in\mathrm{Spec}(B)$ is a singular point when $n\geq2$ and
$m\geq2$ , $\widetilde{B}$ is rigid (Lemma \ref{rem rigid example})\textit{.}
Therefore, $\widetilde{D}=0$ which means $\overline{D}=0$, a contradiction. 

So the only possibility is that $\overline{f}\in k[\overline{x}]$.
This means that $\deg_{\partial}(f)=0$, and hence that $f\in k[x]$.
So $\ker(D)\subset k[x]$, and finally $k[x]=\ker(D)$ because $\mathrm{tr.deg_{k}}(\ker(D))=1$
and $k[x]$ is algebraically closed in $B$. So we get $\mathrm{ML}(B)=k[x]$.

(2) is an immediate consequence of Proposition \ref{Pro:alri-rigid one dimensional}.
\end{proof}

\subsubsection{\label{sub:The-second-type}\textbf{\emph{ Koras-Russell hypersurfaces
of the second type}}}

\indent\newline\noindent  Here we consider hypersurfaces associated
with $k$-algebras of the form:

\[
B_{n,e,l,Q}=k[X,Y,Z,T]/\langle Y(X^{n}+Z^{e})^{l}-Q(X,Z,T\rangle
\]
 where 
\[
Q(X,Z,T)=T^{m}+f_{1}(X,Z)T^{m-1}+\ldots+f_{m}(X,Z)
\]
 $f_{i}(X,Z)\in k[X,Z]$, $n>1$, $e>1$, $l>1$, and $m>1$. We may
assume without loss of generality that $Q(0,0,0)=0$ . A particular
case of this family corresponds to the so called Koras-Russell hypersurfaces
of the second type considered by S. Kaliman and L. Makar-Limanov (\cite{key-sh and makar})
where they computed their $\mathrm{ML}$-invariants. Here we explain
how to apply the $\mathrm{LND}$-filtration method to compute this
invariant for all algebras $B_{n,e,l,Q}$.

Let $x$, $y$, $z$, $t$ be the images of $X$, $Y$, $Z$, $T$
in $B_{n,e,l,Q}$. Define $\partial$ by 
\[
\partial=\frac{\partial Q}{\partial t}\partial_{y}+(X^{n}+Z^{e})^{l}\partial_{t}
\]
 We see that $\partial\in\mathrm{LND}(B_{n,e,l,Q})$ with $\ker(\partial)=k[x,z]$,
and $t$ is a local slice for $\partial$. Moreover, we have $\deg_{\partial}(x)=0$,
$\deg_{\partial}(y)=m$, $\deg_{\partial}(z)=0$, and $\deg_{\partial}(t)=1$.
The plinth ideal is $\mathrm{pl}(\partial)=\langle(X^{n}+Z^{e})^{l}\rangle$.
By Lemma \ref{lem:equal-if-it-is-proper}, Prop. \ref{Pro:when-it-is-proper},
and Prop. \ref{prop:graded algebra} we get the following. 

1- The $\partial$-filtration $\{\mathcal{F}_{i}\}_{i\in\mathbb{N}}$
is given by: 
\[
\mathcal{F}_{mi+j}=k[x,z]t^{j}y^{i}+\mathcal{F}_{mi+j-1}
\]
where $i\in\mathbb{N}$, and $j\in\{0,\ldots,m-1\}$.

2- The associated graded algebra $\mathrm{Gr}(B_{n,e,l,Q})=\oplus_{i\in\mathbb{N}}\overline{B}_{i}$,
where $\overline{B}_{i}=\mathcal{F}_{i}/\mathcal{F}_{i-1}$, is generated
by $\overline{x}=gr_{\partial}(x)$, $\overline{y}=gr_{\partial}(y)$,
$\overline{z}=gr_{\partial}(z)$, $\overline{t}=gr_{\partial}(t)$
as an algebra over $k$ with the relation $\overline{y}(\overline{x}^{n}+\overline{z}^{e})^{l}=\overline{t}^{m}$,
i.e. $\mathrm{Gr}(B_{n,e,l,Q})=k[\overline{X},\overline{Y},\overline{Z},\overline{T}]/\langle\overline{Y}(\overline{X}^{n}+\overline{Z}^{e})^{l}-\overline{T}^{m}\rangle$.
And we have : 
\[
\overline{B}_{mi+j}=k[\overline{x},\overline{z}]\overline{t}^{j}\overline{y}^{i}
\]
where $i\in\mathbb{N}$, and $j\in\{0,\ldots,m-1\}$.
\begin{prop}
With the notation above we have:

\emph{(1)} $\mathrm{ML}(B_{n,e,l,Q})=k[x,z]$. Consequently $B$ is
semi-rigid.

\emph{(2)} Every $D\in\mathrm{LND}(B_{n,e,l,Q})$ has the form $D=f(x,z)\partial$.\end{prop}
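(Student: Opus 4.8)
The plan is to follow exactly the template established in the Danielewski case (Proposition \ref{prop:Daniel-example}), since the two propositions have identical structure and differ only in the specific graded algebra produced. By Proposition \ref{Daigle}, any non-zero $D\in\mathrm{LND}(B_{n,e,l,Q})$ induces a non-zero homogeneous $\overline{D}\in\mathrm{LND}(\mathrm{Gr}(B))$, where $\mathrm{Gr}(B)=k[\overline{X},\overline{Y},\overline{Z},\overline{T}]/\langle\overline{Y}(\overline{X}^{n}+\overline{Z}^{e})^{l}-\overline{T}^{m}\rangle$ and the graded pieces are $\overline{B}_{mi+j}=k[\overline{x},\overline{z}]\overline{t}^{j}\overline{y}^{i}$. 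First I would take $f\in\ker(D)\setminus k[x,z]$ and pass to $\overline{f}\in\ker(\overline{D})$, using that $\mathrm{gr}_{\partial}$ is multiplicative (P1 of Lemma \ref{lem:graded relation}) and that $\ker(\overline{D})$ is factorially closed. The degree-zero part being $\overline{B}_{0}=k[\overline{x},\overline{z}]$, if $\overline{f}\notin k[\overline{x},\overline{z}]$ then either $\overline{t}$ or $\overline{y}$ must divide $\overline{f}$, and I would derive a contradiction in each case, concluding $\overline{f}\in k[\overline{x},\overline{z}]$, hence $\deg_{\partial}(f)=0$ and $f\in k[x,z]$.

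The case $\overline{t}\mid\overline{f}$ is the easy one and mirrors the $\overline{y}$-case of the Danielewski argument: factorial closedness forces $\overline{t}\in\ker(\overline{D})$, and then the relation $\overline{t}^{m}=\overline{y}(\overline{x}^{n}+\overline{z}^{e})^{l}$ together with factorial closedness forces $\overline{y}$, and each irreducible factor of $\overline{x}^{n}+\overline{z}^{e}$, into $\ker(\overline{D})$; since $\ker(\overline{D})$ also contains $\overline{x},\overline{z}$ (these generate $\overline{B}_{0}$, which lies in the kernel of any homogeneous derivation of positive degree), one gets $\overline{D}=0$, a contradiction. The slightly more delicate case is $\overline{y}\mid\overline{f}$, handled as in the Danielewski $\overline{z}$-case: here $\overline{D}(\overline{y})=0$, so $\overline{D}$ descends to a locally nilpotent derivation $\widetilde{D}$ of the localized ring $\widetilde{B}=k(\overline{y})[\overline{x},\overline{z},\overline{t}]/\langle \overline{y}(\overline{x}^{n}+\overline{z}^{e})^{l}-\overline{t}^{m}\rangle$, which has transcendence degree one over $k(\overline{y})$. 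Rigidity of $\widetilde{B}$ via Lemma \ref{rem rigid example} then yields $\widetilde{D}=0$, whence $\overline{D}=0$, again a contradiction.

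The main obstacle is establishing that $\widetilde{B}$ is rigid, which is the genuinely new input relative to the Danielewski computation. In the earlier example the surface $\overline{x}^{n}\overline{z}=\overline{y}^{m}$ had an obvious isolated singular point at the origin; here I expect to invoke Lemma \ref{rem rigid example} by checking that $\mathrm{Spec}(\widetilde{B})$ has a singular point (over $k(\overline{y})$) or that $\widetilde{B}$ is not factorial. The relation $\overline{t}^{m}=\overline{y}(\overline{x}^{n}+\overline{z}^{e})^{l}$ with $n,e,l,m$ all exceeding $1$ should produce a singularity: the Jacobian criterion applied to $F=\overline{y}(\overline{X}^{n}+\overline{Z}^{e})^{l}-\overline{T}^{m}$ gives partials that vanish simultaneously precisely where $\overline{X}^{n}+\overline{Z}^{e}=0$ together with $\overline{T}=0$ (using $l>1$ to kill the $\overline{X}$- and $\overline{Z}$-derivatives and $m>1$ to kill the $\overline{T}$-derivative), and such points lie on the surface. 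I would confirm that at least one such point is a closed point of $\mathrm{Spec}(\widetilde{B})$, giving the required singularity.

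Once $\ker(D)\subset k[x,z]$ is established, part (1) follows because $k[x,z]=\ker(\partial)$ is algebraically closed in $B_{n,e,l,Q}$ and $\mathrm{tr.deg}_{k}(\ker D)=\mathrm{tr.deg}_{k}(k[x,z])$, forcing $\ker(D)=k[x,z]$; hence $\mathrm{ML}(B)=k[x,z]$ and $B$ is semi-rigid. Part (2) is then immediate from Proposition \ref{Pro:alri-rigid one dimensional}, once one checks its hypotheses (ACC on principal ideals, and $\mathrm{ML}(B)=k[x,z]$ being an HCF-ring, which holds as $k[x,z]$ is a polynomial ring and hence a UFD).
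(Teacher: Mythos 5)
Your overall strategy is the paper's, but both halves of your case analysis contain genuine gaps. In the case $\overline{t}\mid\overline{f}$, your parenthetical claim that $\overline{x},\overline{z}\in\ker(\overline{D})$ because ``$\overline{B}_{0}$ lies in the kernel of any homogeneous derivation of positive degree'' is false: on $k[x,y]$ graded by $\deg x=0$, $\deg y=1$, the derivation $y\,\partial_{x}$ is homogeneous of positive degree and locally nilpotent, yet does not kill the degree-zero element $x$. What you actually have at this stage is $(\overline{x}^{n}+\overline{z}^{e})^{l}\in\ker(\overline{D})$, hence $\overline{x}^{n}+\overline{z}^{e}\in\ker(\overline{D})$ by factorial closedness; but when $\overline{x}^{n}+\overline{z}^{e}$ is irreducible (e.g.\ when $\gcd(n,e)=1$) this gives nothing about $\overline{x}$ and $\overline{z}$ individually, and your fallback of ``each irreducible factor'' does not help. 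The paper closes precisely this step by citing Lemma 9.3 of Freudenburg's book, which says that for a locally nilpotent derivation, $a^{n}+b^{e}\in\ker$ with $n,e\geq2$ forces $a,b\in\ker$; some such input is indispensable here.

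The more serious gap is in the case $\overline{y}\mid\overline{f}$. Your ring $\widetilde{B}=k(\overline{y})[\overline{x},\overline{z},\overline{t}]/\langle\overline{y}(\overline{x}^{n}+\overline{z}^{e})^{l}-\overline{t}^{m}\rangle$ has transcendence degree \emph{two} over $k(\overline{y})$, not one: $\mathrm{Gr}(B)$ is a threefold, unlike the Danielewski surface, so inverting the single element $\overline{y}$ leaves a surface over $k(\overline{y})$. Lemma \ref{rem rigid example} applies only to domains of transcendence degree one, so no Jacobian computation of singular points can rescue the argument; indeed, a singular surface need not be rigid --- the graded Danielewski surface $\overline{x}^{n}\overline{z}=\overline{y}^{m}$ is singular at the origin yet carries a nonzero locally nilpotent derivation. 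This is exactly where the paper's proof does extra work that yours omits: it chooses a second homogeneous element $H\in\ker(\overline{D})$ algebraically independent of $\overline{y}$ (possible because $\mathrm{tr.deg_{k}}\ker(\overline{D})=2$ and the kernel of a homogeneous derivation is generated by homogeneous elements), uses the description of the graded pieces $\overline{B}_{mi+j}=k[\overline{x},\overline{z}]\overline{t}^{j}\overline{y}^{i}$ to reduce to $H=h(\overline{x},\overline{z})$ non-constant with $h(0,0)=0$, and then extends $\overline{D}$ to $k(\overline{y},H)[\overline{x},\overline{z},\overline{t}]$ modulo the \emph{two} relations $h(\overline{x},\overline{z})-H$ and $\overline{y}(\overline{x}^{n}+\overline{z}^{e})^{l}-\overline{t}^{m}$. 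Only that ring has transcendence degree one over its base field $k(\overline{y},H)$ and a singular point at $0$, so that the rigidity lemma applies and yields the contradiction. Your parts (1)-conclusion and (2) are fine once the two cases are repaired, but as written both contradictions rest on unsupported or false claims.
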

\begin{proof}
(1) Given a non-zero $D\in\mathrm{LND}(B)$. By Proposition \ref{Daigle}
$D$ induces a non-zero $\overline{D}\in\mathrm{LND}\left(\mathrm{Gr}(B)\right)$.
Suppose that $f\in\ker(D)\setminus k$, then $\overline{f}\in\ker(\overline{D})\setminus k$.
So there exists $i\in\mathbb{N}$ such that $\overline{f}\in\overline{B}_{i}$.
Assume that $\overline{f}\notin k[\overline{x},\overline{z}]=\overline{B}_{0}$,
then one of the elements $\overline{t}$, $\overline{y}$ must divides
$\overline{f}$ (see \S \ref{sub:The-second-type} above). Which
leads to a contradiction as follows: 

If $\overline{t}$ divides $\overline{f}$ , then $\overline{t}\in\ker(\overline{D})$
as $\ker(\overline{D})$ is factorially closed, and for the same reason
$\overline{y},(\overline{x}^{n}+\overline{z}^{e})^{l}\in\ker(\overline{D})$
due to the relation $\overline{y}(\overline{x}^{n}+\overline{z}^{e})^{l}=\overline{t}^{m}$.
So $\overline{x}^{n}+\overline{z}^{e}\in\ker(\overline{D})$ which
implies that $\overline{x},\overline{z}\in\ker(\overline{D})$ (\cite{key-gene}
Lemma 9.3). This means $\overline{D}=0$, a contradiction.

Finally, if $\overline{y}$ divides $\overline{f}$, then $\overline{D}(\overline{y})=0$.
Choose $H\in\ker(\overline{D})$ which is homogeneous and algebraically
independent of $\overline{y}$, which is possible, since $\mathrm{tr.deg_{k}}\ker(\overline{D})=2$
and $\ker(\overline{D})$ is generated by homogeneous elements. Then
by \S \ref{sub:The-second-type}, $H$ has the form $H=h(\overline{x},\overline{z}).\overline{y}^{l}$.
By algebraic dependence, we may assume $H=h(\overline{x},\overline{z})$,
which is non-constant, and that $h(0,0)=0$. So $\overline{D}$ extends
to a locally nilpotent derivation $\widetilde{D}$ of the ring $\widetilde{B}=k(\overline{y},H)${[}$\overline{x},\overline{z},\overline{t}]/\langle h(\overline{x},\overline{z})-H,\overline{y}(\overline{x}^{n}+\overline{z}^{e})^{l}-\overline{t}^{m}\rangle$.
But $\widetilde{B}$ is of transcendence degree one over the field
$k(\overline{y},H)$ whose spectrum has a singular point at $0$.
This means that $\widetilde{B}$ is rigid (Lemma \ref{rem rigid example}).
Thus $\widetilde{D}=0$, which implies $\overline{D}=0$, a contradiction.

So the only possibility is that $\overline{f}\in k[\overline{x},\overline{z}]$,
and this means $\deg_{\partial}(f)=0$, thus $f\in k[x,z]$ and $\ker(D)\subset k[x,z]$.
Finally, $k[x,z]=\ker(D)$ because $\mathrm{tr.deg_{k}}(\ker(D))=2$.
So we get $\mathrm{ML}(B)=k[x,z]$

(2) follows again from Proposition \ref{Pro:alri-rigid one dimensional}.
\end{proof}

\section{\textbf{A new class of semi-rigid rings}}

In this section, we use the $\mathrm{LND}$-filtration method to establish
the semi-rigidity of new families of two dimensional domains of the
form 
\[
R=k[X,Y,Z]/\left\langle X^{n}Y-P\left(X,Q(X,Y)-X^{e}Z\right)\right\rangle 
\]
for suitable integers $n,e\geq2$ and polynomials $P(X,T),Q(X,T)\in k[X,T]$.
They share with Danielewski hypersurfaces discussed in \ref{ex:Daniel}
above, the property to come naturally equipped with an irreducible
locally nilpotent derivation induced by a locally nilpotent derivation
of $k[X,Y,Z]$. But in contrast with the Danielewski hypersurfaces
case, the corresponding derivation on $k[X,Y,Z]$ are no longer triangular,
in fact not even triangulable by virtue of characterization due to
Daigle \cite{key-Daigle}.

We will begin with a very elementary example illustrating the steps
needed to determine the $\mathrm{LND}$-filtration and its associated
graded algebra, and then we proceed to the general case.

\subsection{\label{dd}A toy example}

\indent\newline\noindent  We let 
\[
R=k[X,Y,Z]/\langle X^{2}Y-(Y^{2}-XZ)^{2}\rangle
\]
 and we let $x$, $y$, $z$ be the images of $X$, $Y$, $Z$ in
$R$. A direct computation reveals that the derivation 
\[
2XS\partial_{Y}+(4YS-X^{2})\partial_{Z}
\]
of $k[X,Y,Z]$ where $S:=Y^{2}-XZ$ is locally nilpotent and annihilates
the polynomial $X^{2}Y-(Y^{2}-XZ)^{2}$. It induces a locally nilpotent
derivation $\partial$ of $R$ for which we have $\partial(x)=0$,
$\partial^{3}(y)=0,\partial^{5}(z)=0$. Furthermore, the element $s=y^{2}-xz$
is a local slice for $\partial$ with $\partial(s)=x^{3}$. The kernel
of $\partial$ is $k[x]$ and the plinth ideal is the principal ideal
generated by $x^{3}$. We have $\deg_{\partial}(x)=0$, $\deg_{\partial}(y)=2$,
$\deg_{\partial}(z)=4$, $\deg_{\partial}(s)=1$.
\begin{prop}
\label{Prop:main-result-filtration} With the notation above, we have:

\emph{(1) }The $\partial$-filtration $\{\mathcal{F}_{i}\}_{i\in\mathbb{N}}$
is given by : 
\[
\mathcal{F}_{4i+2j+l}=k[x]s^{l}y^{j}z^{i}+\mathcal{F}_{4i+2j+l-1}
\]
where $i\in\mathbb{N}$, $j\in\{0,1\}$, $l\in\{0,1\}$.

\emph{(2)} The associated graded algebra $\mathrm{Gr_{\partial}}(R)=\oplus_{i\in\mathbb{N}}\overline{R}_{i}$,
where $\overline{R}_{i}=\mathcal{F}_{i}/\mathcal{F}_{i-1}$, is generated
by $\overline{x}=gr_{\partial}(x)$, $\overline{y}=gr_{\partial}(y)$,
$\overline{z}=gr_{\partial}(z)$, $\overline{s}=gr_{\partial}(s)$
as an algebra over $k$ with relations $\overline{x}^{2}\overline{z}=\overline{s}^{2}$
and $\overline{x}\,\overline{z}=\overline{y}^{2}$, i.e. $\mathrm{Gr_{\partial}}(R)=k[\overline{X},\overline{Y},\overline{Z},\overline{S}]/\langle\overline{X}^{2}\overline{Z}-\overline{S}^{2},\overline{X}\overline{Z}-\overline{Y}^{2}\rangle$.
Furthermore:
\[
\overline{R}_{4i+2j+l}=k[\overline{x}]\overline{s}^{l}\overline{y}^{j}\overline{z}^{i}
\]
where $i\in\mathbb{N}$, $j\in\{0,1\}$, $l\in\{0,1,2,3\}$.\end{prop}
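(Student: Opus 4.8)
The plan is to present $R$ as a quotient of a polynomial ring in the four elements $x,y,z,s$ and then apply Lemma~\ref{lem:equal-if-it-is-proper}, Proposition~\ref{Pro:when-it-is-proper} and Proposition~\ref{prop:graded algebra} in sequence. Since $\ker\partial=k[x]$ is generated by $x$ and $\mathrm{pl}(\partial)=\langle x^{3}\rangle$ is principal with $\partial(s)=x^{3}$, the general formula of \S3.1 gives $\mathcal{F}_{0}=k[x]$ and $\mathcal{F}_{1}=k[x]s+k[x]$. I would then write $R=k[X,Y,Z,S]/J$ with $J=\langle X^{2}Y-S^{2},\,S-Y^{2}+XZ\rangle$, the first generator encoding the defining equation $x^{2}y=s^{2}$ and the second recording the definition $s=y^{2}-xz$. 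Assigning the weights $\omega(X)=0,\ \omega(Y)=2,\ \omega(Z)=4,\ \omega(S)=1$ prescribed by $\deg_{\partial}$, I form the filtration $\mathcal{G}$ generated over $k[x]$ by the weighted monomials $s^{a}y^{c}z^{d}$, and recall that $\mathcal{G}_{i}\subseteq\mathcal{F}_{i}$, with equality for $i=0,1$.

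By Lemma~\ref{lem:equal-if-it-is-proper} together with Proposition~\ref{Pro:when-it-is-proper}, everything reduces to proving that the ideal $\hat{J}$ of highest $\omega$-homogeneous components of $J$ is prime; Proposition~\ref{prop:graded algebra} will then identify $\mathrm{Gr}_{\partial}(R)$ with $k[\bar X,\bar Y,\bar Z,\bar S]/\hat{J}$. Taking leading forms of the two generators gives $X^{2}Y-S^{2}$ (already $\omega$-homogeneous of weight $2$, since $\omega(X^{2}Y)=\omega(S^{2})=2$) and $XZ-Y^{2}$ (the weight-$4$ part of $S-Y^{2}+XZ$, the tail $S$ having weight $1$). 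I therefore set $\mathfrak{a}=\langle X^{2}Y-S^{2},\,XZ-Y^{2}\rangle\subseteq\hat{J}$ and must establish both that $\mathfrak{a}$ is prime and that $\mathfrak{a}=\hat{J}$.

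For primality I would factor the quotient as $k[X,Y,Z,S]/\mathfrak{a}\cong D[S]/(S^{2}-X^{2}Y)$, where $D=k[X,Y,Z]/(XZ-Y^{2})\cong k[a^{2},ab,b^{2}]$ (via $X=a^{2},\,Y=ab,\,Z=b^{2}$) is the two-dimensional coordinate ring of the quadric cone. Since $X^{2}Y=a^{5}b$ is not a square in $\mathrm{Frac}(D)\subseteq k(a,b)$, the monic quadratic $S^{2}-X^{2}Y$ is irreducible over $\mathrm{Frac}(D)$, so $D[S]/(S^{2}-X^{2}Y)$ embeds into the field $\mathrm{Frac}(D)[S]/(S^{2}-X^{2}Y)$ and is thus a two-dimensional domain; hence $\mathfrak{a}$ is prime.

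To promote $\mathfrak{a}\subseteq\hat{J}$ to equality I would argue by dimension: the graded surjection $k[X,Y,Z,S]/\mathfrak{a}\twoheadrightarrow k[X,Y,Z,S]/\hat{J}\cong\mathrm{Gr}_{\mathcal{G}}(R)$ has source a two-dimensional domain, while the target has Krull dimension $\dim R=2$ by the flat Rees degeneration of $R$ onto its associated graded; a surjection from a finitely generated $k$-domain onto an algebra of the same dimension has height-zero, hence trivial, kernel, so $\hat{J}=\mathfrak{a}$ is prime. Then Proposition~\ref{Pro:when-it-is-proper} makes $\mathcal{G}$ proper, Lemma~\ref{lem:equal-if-it-is-proper} yields $\mathcal{F}=\mathcal{G}$ (statement (1)), and Proposition~\ref{prop:graded algebra} gives $\mathrm{Gr}_{\partial}(R)\cong k[\bar X,\bar Y,\bar Z,\bar S]/\langle\bar X^{2}\bar Y-\bar S^{2},\,\bar X\bar Z-\bar Y^{2}\rangle$; reducing an arbitrary monomial through the rewriting rules $\bar s^{2}=\bar x^{2}\bar y$ and $\bar y^{2}=\bar x\bar z$ to the normal form $k[\bar x]\bar s^{l}\bar y^{j}\bar z^{i}$ with $l,j\in\{0,1\}$, whose weights $4i+2j+l$ exhaust $\mathbb{N}$ without repetition, produces the description of the $\overline{R}_{d}$. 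The main obstacle is exactly the identification $\hat{J}=\mathfrak{a}$: the evident generators of $J$ need not form a standard basis, so one must rule out extra leading forms, e.g. by checking that the $S$-element $XY^{3}-XYS-S^{2}Z\in J$ has leading form $XY^{3}-S^{2}Z\in\mathfrak{a}$, or via the dimension argument above.
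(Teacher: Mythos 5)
Your proposal is correct, and its skeleton is the same as the paper's: both present $R=k[X,Y,Z,S]/J$ with weights $(0,2,4,1)$ on $(X,Y,Z,S)$ and reduce everything, via Lemma \ref{lem:equal-if-it-is-proper}, Proposition \ref{Pro:when-it-is-proper} and Proposition \ref{prop:graded algebra}, to the primality of $\hat{J}$. Where you genuinely add content is at that crux: the paper simply asserts that $\hat{J}=\langle X^{2}Y-S^{2},Y^{2}-XZ\rangle$ and that this ideal is prime, with no justification of either claim, whereas you prove both. Your primality argument (the quotient is $D[S]/(S^{2}-X^{2}Y)$ with $D\cong k[a^{2},ab,b^{2}]$, and $X^{2}Y=a^{5}b$ is not a square in $\mathrm{Frac}(D)$, so the monic quadratic stays irreducible) is sound, and your dimension argument for $\mathfrak{a}=\hat{J}$ --- flat Rees degeneration gives $\dim k^{[4]}/\hat{J}=\dim R=2$, and a surjection from an affine $k$-domain onto an algebra of equal dimension has zero kernel --- correctly isolates exactly the point the paper glosses over, namely that the leading forms of the chosen generators of $J$ need not a priori generate all of $\hat{J}$. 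One bookkeeping caution: this step uses the isomorphism $k^{[4]}/\hat{J}\cong\mathrm{Gr}_{\mathcal{G}}(R)$ \emph{before} properness of $\mathcal{G}$ is known, so you must invoke Prop.\ 4.1 of Kaliman--Makar-Limanov directly (as the paper's proof of Proposition \ref{prop:graded algebra} does), rather than Proposition \ref{prop:graded algebra} itself, whose statement assumes properness; alternatively the Gr\"obner degeneration $k^{[4]}/J\rightsquigarrow k^{[4]}/\hat{J}$ gives the dimension count without that identification. Your remaining difference is presentational but an improvement: you carry out the monomial rewriting in the graded algebra, where the relations $\overline{s}^{2}=\overline{x}^{2}\overline{y}$ and $\overline{y}^{2}=\overline{x}\,\overline{z}$ are homogeneous, while the paper rewrites inside $R$ and must track the lower-degree terms of $(s+xz)^{n}$; the two computations are equivalent, since $\overline{R}_{r}=k[\overline{x}]\overline{s}^{l}\overline{y}^{j}\overline{z}^{i}$ is precisely statement (1) read modulo $\mathcal{F}_{r-1}$. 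Note finally that your relations are the right ones: the relation $\overline{X}^{2}\overline{Z}-\overline{S}^{2}$ printed in the Proposition is a typo for $\overline{X}^{2}\overline{Y}-\overline{S}^{2}$, as both the weights and the paper's own proof ($\hat{J}=\langle X^{2}Y-S^{2},Y^{2}-XZ\rangle$) confirm.
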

\begin{proof}
1) First, the $\partial$-filtration \textit{$\{\mathcal{F}_{i}\}_{i\in\mathbb{N}}$}
is given by $\mathcal{F}_{r}=\sum_{h\leq r}H_{h}$ where $H_{h}:=\sum_{u+2v+4w=h}k[x]\left(s^{u}y^{v}z^{w}\right)$
and $u,v,w,h\in\mathbb{N}$. To show this, let $J$ be the ideal in
$k^{[4]}=k[X,Y,Z,S]$ defined by $J=\left(X^{2}Y-(Y^{2}-XZ)^{2},Y^{2}-XZ-S\right)$.
Define an $\mathbb{N}$-degree function $\omega$ on $k^{[4]}$ by
declaring that $\omega(X)=0$, $\omega(S)=1$, $\omega(Y)=2$, and
$\omega(Z)=4$. By Lemma \ref{Pro:when-it-is-proper}, the $\mathbb{N}$-filtration
\textit{$\{\mathcal{G}_{r}\}_{i\in\mathbb{N}}$} where $\mathcal{G}_{r}=\sum_{h\leq r}H_{h}$
is proper if and only if $\hat{J}$ is prime. Which is the case since
$\hat{J}=\left\langle X^{2}Y-S^{2},Y^{2}-XZ\right\rangle $ is prime.
Thus by Lemma \ref{lem:equal-if-it-is-proper} we get the desired
description.

Second, let \textit{$l\in\{0,1\}$ }\textit{\emph{and }}\textit{$j\in\{0,1,2,3\}$
}\textit{\emph{be such that }} $l:=r$ mod $2$, $j:=r-l$ mod $4$,
and $i:=\frac{r-2j-l}{4}$. Then we get the following unique expression
$r=4i+2j+l$. Since $\mathcal{F}_{r}=\sum_{u+2v+4w=r}k[x]\left(s^{u}y^{v}z^{w}\right)+\mathcal{F}_{r-1}$,
we conclude in particular that $\mathcal{F}_{r}\supseteq k[x]s^{l}y^{j}z^{i}+\mathcal{F}_{r-1}$.
For the other inclusion, the relation $x^{2}y=s^{2}$ allows to write
$s^{u}y^{v}z^{w}=x^{e}s^{l}y^{v_{0}}z^{w}$ and from the relation
$y^{2}=s+zx$ we get $x^{e}s^{l}y^{v_{0}}z^{w}=x^{e}s^{l}y^{j}(s+xz)^{n}z^{w}$.
Since the monomial with the highest degree relative to $\deg_{\partial}$
in $(s+xz)^{n}$ is $x^{n}.z^{n}$, we deduce that $x^{e}s^{l}y^{j}(s+xz)^{n}z^{w}=x^{e+n}s^{l}y^{j}z^{w+n}+\sum M_{\beta}$
where $M_{\beta}$ is monomial in $x$, $y$, $s$, $z$ of degree
less than $r$. Since the expression $r=4i+2j+l$ is unique, we get
$w+n=i$. So $s^{u}y^{v}z^{w}=x^{e+n}s^{l}y^{j}z^{i}+f$ where $f\in\mathcal{F}_{r-1}$.
Thus $k[x]\left(s^{u}y^{v}z^{w}\right)\subseteq k[x]s^{l}y^{j}z^{i}+\mathcal{F}_{r-1}$
and finally $\mathcal{F}_{r}=k[x]s^{l}y^{j}z^{i}+\mathcal{F}_{r-1}$.

2) By part (1), an element $f$ of degree $r$ can be written as $f=g(x)s^{l}y^{j}z^{i}+f_{0}$
where $f_{0}\in\mathcal{F}_{r-1}$, $l=r$ mod $2$, $j=r-l$ mod
$4$, $i=\frac{r-2j-l}{4}$, and $i\in\mathbb{N}$, $j\in\{0,1\}$,
$l\in\{0,1\}$. So by Lemma \ref{lem:graded relation}, P2, P1, P3
respectively we get 
\[
\overline{f}=\overline{g(x)s^{l}y^{j}z^{i}+h}=\overline{g(x)s^{l}y^{j}z^{i}}=\overline{g(x)}\overline{s}^{l}\overline{y}^{j}\overline{z}^{i}=g(\overline{x})\overline{s}^{l}\overline{y}^{j}\overline{z}^{i}
\]
 and therefore $\overline{B}_{4i+2j+l}=k[\overline{x}]\overline{s}^{l}\overline{y}^{j}\overline{z}^{i}$. 

Finally, by Proposition \ref{prop:graded algebra}, $\mathrm{Gr}_{\partial}(B)=k[\overline{X},\overline{Y},\overline{Z},\overline{S}]/\langle\overline{X}^{2}\overline{Z}-\overline{S}^{2},\overline{X}\,\overline{Z}-\overline{Y}^{2}\rangle$.
\end{proof}

\subsection{\label{a more general case}A more general family}

\indent\newline\noindent  We now consider more generally rings $R$
of the form 
\[
k[X,Y,Z]/\left\langle X^{n}Y-P\left(X,Q(X,Y)-X^{e}Z\right)\right\rangle 
\]
 where 
\[
P(X,S)=S^{d}+f_{d-1}(X)S^{d-1}+\cdots+f_{1}(X)S+f_{0}(X)
\]
\[
Q(X,Y)=Y^{m}+g_{m-1}(X)Y^{m-1}+\cdots+g_{1}(X)Y+g_{0}(X)
\]
 $n\geq2$, $d\geq2$, $m\geq1$, and $e\geq1$. Up to a change of
variable of the form $Y\mapsto Y-c$ where $c\in k$, we may assume
that $0\in\mathrm{Spec}(R)$. 

Let $x$, $y$, $z$ be the images of $X$, $Y$, $Z$ in $R$. Define
$\partial$ by: $\partial(x)=0$, $\partial(s)=x^{n+e}$ where $s:=Q(x,y)-x^{e}z$.
Considering the relation $x^{n}y=P(x,Q(x,y)-x^{e}z)$, a simple computation
leads to $\partial(y)=x^{e}\frac{\partial P}{\partial s}$ ,$\partial(z)=\frac{\partial Q}{\partial y}\frac{\partial P}{\partial s}-x^{n}$,
i.e. 
\[
\partial:=x^{e}\frac{\partial P}{\partial s}\partial_{y}+(\frac{\partial Q}{\partial y}\,\frac{\partial P}{\partial s}-x^{n})\partial_{z}
\]
where $\frac{\partial P}{\partial s}=ds^{d-1}+(d-1)f_{d-1}(x)s^{d-2}+\cdots+f_{1}(x)$,
and $\frac{\partial Q}{\partial y}=my^{m-1}+(m-1)g_{m-1}(x)y^{m-2}+\cdots+g_{1}(x)$.
Since $\partial(x^{n}y-P(x,Q(x,y)-x^{e}z))=0$ and $\partial^{d+1}(y)=0,\partial^{md+1}(z)=0$,
$\partial$ is a well-defined locally nilpotent derivation of \textbf{$R$}.
The kernel of $\partial$ is equal to $k[x]$ and the element $s$
is a local slice for $\partial$ by construction. One checks further
that the plinth ideal is equal to $\mathrm{pl}(\partial)=\left\langle x^{n+e}\right\rangle $.
A direct computation shows that $\deg_{\partial}(x)=0$, $\deg_{\partial}(y)=d$,
$\deg_{\partial}(z)=md$ and $\deg_{\partial}(s)=1$. Furthermore:

1- The $\partial$-filtration $\{\mathcal{F}_{i}\}_{i\in\mathbb{N}}$
is given by : 
\[
\mathcal{F}_{mdi+dj+l}=k[x]s^{l}y^{j}z^{i}+\mathcal{F}_{mdi+dj+l-1}
\]
where $i\in\mathbb{N}$, $j\in\{0,\ldots,m-1\}$, $l\in\{0,\ldots,d-1\}$.

2- The associated graded algebra $\mathrm{Gr}(R)=\oplus_{i\in\mathbb{N}}\overline{R}_{i}$,
where $\overline{R}_{i}=\mathcal{F}_{i}/\mathcal{F}_{i-1}$, is generated
by $\overline{x}=gr_{\partial}(x)$, $\overline{y}=gr_{\partial}(y)$,
$\overline{z}=gr_{\partial}(z)$, $\overline{s}=gr_{\partial}(s)$
as an algebra over $k$ with relations $\overline{x}^{n}\overline{z}=\overline{s}^{d}$
and $\overline{x}^{e}\,\overline{z}=\overline{y}^{m}$ , i.e. $\mathrm{Gr}(R)=k[\overline{X},\overline{Y},\overline{Z},\overline{S}]/\left\langle \overline{X}^{n}\overline{Z}-\overline{S}^{d},\,\overline{X}^{e}\,\overline{Z}-\overline{Y}^{m}\right\rangle $.
And we have : 
\[
\overline{R}_{mdi+dj+l}=k[\overline{x}]\overline{s}^{l}\overline{y}^{j}\overline{z}^{i}
\]
where $i\in\mathbb{N}$, $j\in\{0,\ldots,m-1\}$, $l\in\{0,\ldots,d-1\}$.
\begin{thm}
\label{fdsa} With the above notation the following hold:

\emph{(1) }$\mathrm{ML}(R)=k[x]$. Consequently $R$ is semi-rigid.

\emph{(2)} Every $D\in\mathrm{LND}(R)$ has form $D=f(x)\partial$,
i.e. $R$ is almost rigid.\end{thm}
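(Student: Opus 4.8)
The plan is to follow the strategy already used for the Danielewski and Koras--Russell examples: transfer the problem to the associated graded algebra $\mathrm{Gr}(R)$, where homogeneity of the induced derivation together with factorial closedness of its kernel severely constrains the possibilities. Concretely, let $D\in\mathrm{LND}(R)$ be non-zero. By Proposition \ref{Daigle} it induces a non-zero homogeneous $\overline{D}\in\mathrm{LND}(\mathrm{Gr}(R))$, and $\mathrm{gr}_{\partial}(\ker D)\subset\ker\overline{D}$. Picking $f\in\ker D\setminus k$ and writing $\overline{f}=\mathrm{gr}_{\partial}(f)$, the element $\overline{f}$ is a non-constant homogeneous element of $\ker\overline{D}$, so by the description of $\mathrm{Gr}(R)$ above it has the form $\overline{f}=g(\overline{x})\,\overline{s}^{\,l}\overline{y}^{\,j}\overline{z}^{\,i}$ with $g\neq0$, $l\in\{0,\dots,d-1\}$, $j\in\{0,\dots,m-1\}$, $i\in\mathbb{N}$. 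The whole of part (1) amounts to showing that $\overline{f}\in k[\overline{x}]$, i.e. $l=j=i=0$; granting this, $\deg_{\partial}(f)=0$ forces $f\in\mathcal{F}_{0}=k[x]$, whence $\ker D\subseteq k[x]$, and equality follows because $\mathrm{tr.deg}_{k}(\ker D)=1$ and $k[x]$ is algebraically (indeed factorially) closed in $R$. This yields $\mathrm{ML}(R)=k[x]$ and semi-rigidity.

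So assume $\overline{f}\notin k[\overline{x}]$; then at least one of $\overline{s},\overline{y},\overline{z}$ divides $\overline{f}$. I would first dispose of the two easy cases using that $\ker\overline{D}$ is factorially closed (\cite{key-gene}, Principle 1) and the defining relations $\overline{x}^{n}\overline{z}=\overline{s}^{d}$ and $\overline{x}^{e}\overline{z}=\overline{y}^{m}$ of $\mathrm{Gr}(R)$. If $\overline{s}\mid\overline{f}$ then $\overline{s}\in\ker\overline{D}$, hence $\overline{x}^{n}\overline{z}=\overline{s}^{d}\in\ker\overline{D}$ gives $\overline{x},\overline{z}\in\ker\overline{D}$, and then $\overline{y}^{m}=\overline{x}^{e}\overline{z}\in\ker\overline{D}$ gives $\overline{y}\in\ker\overline{D}$; all four generators lie in $\ker\overline{D}$, so $\overline{D}=0$, a contradiction. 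The case $\overline{y}\mid\overline{f}$ is symmetric: $\overline{y}\in\ker\overline{D}$ forces $\overline{x},\overline{z}\in\ker\overline{D}$ via $\overline{y}^{m}=\overline{x}^{e}\overline{z}$, and then $\overline{s}\in\ker\overline{D}$ via $\overline{s}^{d}=\overline{x}^{n}\overline{z}$, again $\overline{D}=0$.

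The remaining case $\overline{z}\mid\overline{f}$ (with $\overline{s},\overline{y}\nmid\overline{f}$) is the crux. Here factorial closedness only gives $\overline{z}\in\ker\overline{D}$, which does not immediately collapse $\overline{D}$. Since $\overline{z}$ is transcendental over $k$, I would localize $\mathrm{Gr}(R)$ at $k[\overline{z}]\setminus\{0\}$ to obtain the domain
\[
\widetilde{R}=k(\overline{z})[\overline{x},\overline{y},\overline{s}]\big/\big\langle\,\overline{s}^{d}-\overline{z}\,\overline{x}^{n},\ \overline{y}^{m}-\overline{z}\,\overline{x}^{e}\,\big\rangle,
\]
of transcendence degree one over $k(\overline{z})$; because $\overline{z}\in\ker\overline{D}$, the derivation $\overline{D}$ extends by the quotient rule to a locally nilpotent $\widetilde{D}$ on $\widetilde{R}$. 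The key point, and the step I expect to be the main obstacle, is to show that $\widetilde{R}$ is rigid, for then $\widetilde{D}=0$, hence $\overline{D}=0$, the desired contradiction. I would obtain rigidity from Lemma \ref{rem rigid example} by exhibiting a singular point: at the origin $\overline{x}=\overline{y}=\overline{s}=0$ the Jacobian of the two defining relations with respect to $(\overline{x},\overline{y},\overline{s})$ has the row of $\overline{s}^{d}-\overline{z}\,\overline{x}^{n}$ identically zero (as $n,d\geq2$), so its rank is at most $1$; the tangent space at the origin therefore has dimension at least $2>1=\dim\widetilde{R}$, and the origin is singular. One should check that $\langle\overline{s}^{d}-\overline{z}\,\overline{x}^{n},\ \overline{y}^{m}-\overline{z}\,\overline{x}^{e}\rangle$ is the full defining ideal of $\widetilde{R}$ (inherited from the presentation of $\mathrm{Gr}(R)$ by $\hat{J}$) so that this Jacobian computation legitimately detects singularity; the argument is then uniform in $n\geq2,\ d\geq2,\ m\geq1,\ e\geq1$.

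Finally, part (2) follows from Proposition \ref{Pro:alri-rigid one dimensional}. The ring $R$ is a finitely generated $k$-domain, hence Noetherian and satisfying the ACC on principal ideals; it is semi-rigid by part (1) with $\mathrm{ML}(R)=k[x]\cong k^{[1]}$, which is a principal ideal domain and in particular an HCF-ring. Proposition \ref{Pro:alri-rigid one dimensional} then provides a unique irreducible locally nilpotent derivation up to a unit of $R$, and since one checks that the given $\partial$ is irreducible (no non-unit of $k[x]=\ker\partial$ divides all of $\partial(y),\partial(z)$ and $\partial(s)=x^{n+e}$ simultaneously), every $D\in\mathrm{LND}(R)$ takes the form $D=f(x)\partial$ with $f\in\ker\partial=k[x]$; that is, $R$ is almost rigid.
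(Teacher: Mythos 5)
Your proposal is correct and follows essentially the same route as the paper's own proof: pass to $\mathrm{Gr}(R)$ via Proposition \ref{Daigle}, kill the cases $\overline{s}\mid\overline{f}$ and $\overline{y}\mid\overline{f}$ by factorial closedness of $\ker\overline{D}$ together with the two graded relations, handle $\overline{z}\mid\overline{f}$ by extending $\overline{D}$ to $\widetilde{R}=k(\overline{z})[\overline{x},\overline{y},\overline{s}]/\langle\cdots\rangle$ and invoking rigidity from the singular point (Lemma \ref{rem rigid example}), conclude $\ker D=k[x]$, and deduce (2) from Proposition \ref{Pro:alri-rigid one dimensional}. The only divergence is that you make explicit two details the paper leaves implicit (the Jacobian verification that the origin of $\widetilde{R}$ is singular, and the irreducibility of $\partial$ needed to get the exact form $D=f(x)\partial$), which only strengthens the write-up.
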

\begin{proof}
(1) Given a non-zero $D\in\mathrm{LND}(R)$. By Proposition \ref{Daigle},
$D$ respects the $\partial$-filtration and induces a non-zero locally
nilpotent derivation $\overline{D}$ of $\mathrm{Gr}(R)$. Suppose
that $f\in\ker(D)\setminus k$, then $\overline{f}\in\ker(\overline{D})\setminus k$
is an homogenous element of $\mathrm{Gr}(R)$. So there exists $i\in\mathbb{N}$
such that $\overline{f}\in\overline{R}_{i}$.

Assume that $\overline{f}\notin k[\overline{x}]=\overline{R}_{0}$,
then one of the elements $\overline{s}$, $\overline{y}$, $\overline{z}$
must divides $\overline{f}$ by \ref{a more general case},2. Which
leads to a contradiction as follows : 

If $\overline{s}$ divides $\overline{f}$ , then $\overline{s}\in\ker(\overline{D})$
as $\ker(\overline{D})$ is factorially closed, and for the same reason
$\overline{x},\overline{y}\in\ker(\overline{D})$ due to the relation
$\overline{x}^{n}\overline{y}=\overline{s}^{d}$. Then by the relation
$\overline{x}^{e}\,\overline{z}=\overline{y}^{m}$, we must have $\overline{z}\in\ker(\overline{D})$,
which means $\overline{D}=0$, a contradiction. In the same way, we
get a contradiction if $\overline{y}$ divides $\overline{f}$. 

Finally, if $\overline{z}$ divides $\overline{f}$, then $\overline{D}(\overline{z})=0$.
So $\overline{D}$ induces in a natural way a locally nilpotent derivation
$\widetilde{D}$ of the ring $\widetilde{R}=k(\overline{z})[\overline{x},\overline{y},\overline{s}]/\langle\overline{x}^{n}\,\overline{z}-\overline{s}^{d},\overline{x}^{e}\,\overline{z}-\overline{y}^{m}\rangle$.
But since $0\in\mathrm{Spec(\widetilde{R})}$ is a singular point,
$\widetilde{R}$ is rigid (Lemma \ref{rem rigid example}). So $\widetilde{D}=0$,
which implies $\overline{D}=0$, a contradiction.

So the only possibility is that $\overline{f}\in k[\overline{x}]$,
and this means $\deg_{\partial}(f)=0$, thus $f\in k[x]$ and $\ker(D)\subset k[x]$.
Finally, $k[x]=\ker(D)$ because $\mathrm{tr.deg_{k}}(\ker(D))=1$
and $k[x]$ is algebraically closed in $B$. So $\mathrm{ML}(R)=k[x]$.

(2) follows again from Proposition \ref{Pro:alri-rigid one dimensional}.
\end{proof}

\section{\textbf{Further applications of the $\mathrm{LND}$-filtratoin}}

Given a commutative domain $B$ over an algebraically closed field
$k$ of characteristic zero, we denote $\mathrm{Aut_{k}}(B)$ the
group of algebraic $k$-automorphisms of $B$. This group acts by
conjugation on $\mathrm{LND}(B)$. An immediate consequence is that
$\alpha(\mathrm{ML}(B))=\mathrm{ML}(B)$ for every $\alpha\in\mathrm{Aut_{k}}(B)$
which yield in particular an induced action of $\mathrm{Aut_{k}}(B)$
on $\mathrm{ML}(B)$. Let $\partial_{\alpha}=\alpha^{-1}\partial\alpha$
be the conjugate of $\partial$ by a given automorphism $\alpha$
of $B$, it is straightforward to check that $\alpha\{\ker(\partial_{\alpha})\}=\ker(\partial)$
and more generally that $\deg_{\partial_{\alpha}}(b)=\deg_{\partial}(\alpha(b))$
for any $b\in B$. In other words, $\alpha$ respects $\deg_{\partial}$
and $\deg_{\partial_{\alpha}}$(i.e. $\alpha$ sends an element of
degree $n$ relative to $\deg_{\partial_{\alpha}}$, to an element
of the same degree $n$ relative to $\deg_{\partial}$. 
\begin{defn}
We say that an algebraic $k$-automorphism $\alpha$ preserves the
\textit{$\partial$-filtration} for some $\partial\in\mathrm{LND}(B)$
if $\deg_{\partial}(\alpha(b))=\deg_{\partial}(b)$ for any $b\in B$.\end{defn}
\begin{lem}
\label{Lem:preserve-filtration} Let $\partial\in\mathrm{LND}(B)$
and $\alpha\in\mathrm{Aut_{k}}(B)$. Then $\partial$ and $\partial_{\alpha}$
are equivalent, i.e. have the same kernel, if and only if $\alpha$
preserve the $\partial$-filtration. \end{lem}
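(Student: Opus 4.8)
The plan is to reduce everything to the single identity $\deg_{\partial_{\alpha}}(b)=\deg_{\partial}(\alpha(b))$ recorded just before the statement, and to prove the two implications separately. The easy direction is $(\Leftarrow)$: assuming that $\alpha$ preserves the $\partial$-filtration means precisely $\deg_{\partial}(\alpha(b))=\deg_{\partial}(b)$ for all $b$, so combining with the identity gives $\deg_{\partial_{\alpha}}(b)=\deg_{\partial}(b)$ for every $b\in B$. Hence the two $\mathbb{N}$-degree functions coincide and in particular $\ker\partial_{\alpha}=\{b:\deg_{\partial_{\alpha}}(b)\le 0\}=\{b:\deg_{\partial}(b)\le 0\}=\ker\partial$, so $\partial$ and $\partial_{\alpha}$ are equivalent.

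For the converse $(\Rightarrow)$, the key point will be that two locally nilpotent derivations with the same kernel induce the same $\mathbb{N}$-degree function. First I would invoke \cite{key-gene} Principle 12, exactly as in the discussion of semi-rigidity above: since $A:=\ker\partial=\ker\partial_{\alpha}$, there exist non-zero $a,b\in A$ with $a\partial=b\partial_{\alpha}$. Then I would use the elementary observation that for any $D\in\mathrm{LND}(B)$ and any non-zero $c\in\ker D$ one has $(cD)^{j}=c^{j}D^{j}$ on all of $B$ (because $D(c)=0$), so that, $B$ being a domain, $\ker((cD)^{j+1})=\ker(D^{j+1})$ for every $j$ and therefore $\deg_{cD}=\deg_{D}$. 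Applying this to $a\partial$ and to $b\partial_{\alpha}$ yields $\deg_{\partial}=\deg_{a\partial}=\deg_{b\partial_{\alpha}}=\deg_{\partial_{\alpha}}$.

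Finally I would feed this back into the identity: from $\deg_{\partial}=\deg_{\partial_{\alpha}}$ and $\deg_{\partial_{\alpha}}(b)=\deg_{\partial}(\alpha(b))$ we obtain $\deg_{\partial}(b)=\deg_{\partial}(\alpha(b))$ for all $b\in B$, which is exactly the assertion that $\alpha$ preserves the $\partial$-filtration.

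The main obstacle is the converse implication, and within it the step showing that equal kernels force equal filtrations; everything hinges on the relation $a\partial=b\partial_{\alpha}$ supplied by Principle 12 together with the identity $(cD)^{j}=c^{j}D^{j}$ for $c\in\ker D$. The remainder is routine bookkeeping with the already-established comparison between $\deg_{\partial_{\alpha}}$ and $\deg_{\partial}$.
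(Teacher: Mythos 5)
Your proof is correct and follows essentially the same route as the paper: both directions reduce to the identity $\deg_{\partial_{\alpha}}(b)=\deg_{\partial}(\alpha(b))$, and the converse uses Principle 12 of \cite{key-gene} to get $a\partial=b\partial_{\alpha}$ and conclude $\deg_{\partial}=\deg_{\partial_{\alpha}}$, exactly as the paper does (it invokes this fact from its earlier semi-rigidity discussion). Your only addition is spelling out, via $(cD)^{j}=c^{j}D^{j}$ for $c\in\ker D$, why equal kernels force equal degree functions, a detail the paper leaves implicit.
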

\begin{proof}
Suppose that \textit{$\partial$ }\textit{\emph{and}}\textit{ $\partial_{\alpha}$}
are equivalents, then $\deg_{\partial}(\alpha(b))=\deg_{\partial_{\alpha}}(b)$
for every $b\in B$, and by hypothesis $\ker(\partial)=\ker(\partial_{\alpha})$,
so $\deg_{\partial}=\deg_{\partial_{\alpha}}$. Then we obtain $\deg_{\partial}(\alpha(b))=\deg_{\partial}(b)$.
Thus $\alpha$ preserves the $\partial$-filtration. Since the other
direction is obvious we are done.
\end{proof}
The following Corollary shows a nice property of a semi-rigid ring.
That is, every algebraic automorphism $\alpha$ has to preserve the
unique filtration induced by any locally nilpotent derivation $\partial$,
i.e. $\alpha$ sends an element of degree $i$ relative to $\partial$
to an element of the same degree relative to $\partial$. Which makes
the computation of the group of automorphisms easier up to the automorphism
group of $\ker(\partial)$. 
\begin{cor}
\label{Cor:preserve-almost-rigid-filtration}Let $B$ be a semi-rigid
$k$-domain, then every $k$-automorphism of $B$ preserve the $\partial$-filtration
for every $\partial\in\mathrm{LND}(B)$.\end{cor}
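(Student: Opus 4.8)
The plan is to reduce the statement to the characterization already established in Lemma~\ref{Lem:preserve-filtration}. That lemma says that, for a fixed $\partial\in\mathrm{LND}(B)$ and $\alpha\in\mathrm{Aut_k}(B)$, the automorphism $\alpha$ preserves the $\partial$-filtration if and only if $\partial$ and its conjugate $\partial_\alpha=\alpha^{-1}\partial\alpha$ are equivalent, i.e.\ $\ker(\partial)=\ker(\partial_\alpha)$. So the entire Corollary comes down to verifying this one kernel equality, given the semi-rigidity hypothesis.

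First I would record the elementary fact that conjugation sends locally nilpotent derivations to locally nilpotent derivations: for any $\alpha\in\mathrm{Aut_k}(B)$ and any $\partial\in\mathrm{LND}(B)$, the conjugate $\partial_\alpha=\alpha^{-1}\partial\alpha$ is again a $k$-derivation (being a composite of a derivation with algebra homomorphisms in the usual way) and is locally nilpotent because $(\partial_\alpha)^n=\alpha^{-1}\partial^n\alpha$, so $\partial_\alpha^n(b)=0$ exactly when $\partial^n(\alpha(b))=0$, and local nilpotence of $\partial$ on the element $\alpha(b)$ forces such an $n$ to exist. In particular $\partial_\alpha\in\mathrm{LND}(B)$, and it is non-zero whenever $\partial$ is.

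Next I would invoke the definition of semi-rigidity (Definition~\ref{def almost-rigid}): all non-zero locally nilpotent derivations of $B$ induce the same proper $\mathbb{N}$-filtration, equivalently, by the discussion following that definition, $\mathrm{ML}(B)=\ker(T)$ for every non-zero $T\in\mathrm{LND}(B)$. Applying this to the two non-zero derivations $\partial$ and $\partial_\alpha$ gives $\ker(\partial)=\mathrm{ML}(B)=\ker(\partial_\alpha)$ directly. Thus $\partial$ and $\partial_\alpha$ are equivalent in the sense of Lemma~\ref{Lem:preserve-filtration}, and that lemma immediately yields that $\alpha$ preserves the $\partial$-filtration. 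Since $\partial$ and $\alpha$ were arbitrary, the conclusion holds for every $\partial\in\mathrm{LND}(B)$ and every $k$-automorphism $\alpha$.

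I do not anticipate a serious obstacle here; the argument is essentially a two-step bookkeeping: establish $\partial_\alpha\in\mathrm{LND}(B)$, then feed the common-kernel property of semi-rigid rings into Lemma~\ref{Lem:preserve-filtration}. The only point that warrants care is the case $\partial=0$, which is vacuous since then the $\partial$-filtration degenerates and there is nothing to preserve, so one restricts attention to non-zero $\partial$, exactly the derivations to which the semi-rigidity hypothesis applies. Everything else is a routine application of already-proved results.
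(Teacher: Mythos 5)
Your proof is correct and takes essentially the same approach as the paper, which disposes of the corollary as a direct consequence of Definition \ref{def almost-rigid} and Lemma \ref{Lem:preserve-filtration}. You simply make explicit the bookkeeping the paper leaves implicit: that $\partial_{\alpha}=\alpha^{-1}\partial\alpha$ is again a non-zero locally nilpotent derivation, and that semi-rigidity gives $\ker(\partial)=\mathrm{ML}(B)=\ker(\partial_{\alpha})$, so the lemma applies.
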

\begin{proof}
A direct consequence of Definition \ref{def almost-rigid} and Lemma
\ref{Lem:preserve-filtration}.
\end{proof}

\subsection{The group of algebraic $k$-automorphisms of a semi-rigid $k$-domain}

\indent\newline\noindent  Suppose that $B$ is a semi-rigid $k$-domain.
Then, it has a unique proper filtration $\{\mathcal{F}_{i}\}_{i\in\mathbb{N}}$
which is the $\partial$-filtration corresponding to any non-zero
locally nilpotent derivation $\partial$ of $B$. Since every algebraic
$k$-automorphism of $B$ preserves this filtration (Corollary \ref{Cor:preserve-almost-rigid-filtration}),
we obtain an exact sequence 
\[
0\rightarrow\mathrm{Aut_{k}}\left(B,\mathrm{ML}(B)\right)\rightarrow\mathrm{Aut_{k}}(B)\rightarrow\mathrm{Aut_{k}}\left(\mathrm{ML}(B)\right)
\]
where $\mathrm{Aut_{k}}\left(B,\mathrm{ML}(B)\right)$ is by definition
the sub-group of $\mathrm{Aut_{k}}(B)$ consisting of elements whose
induced action on $\mathrm{ML}(B)$ is trivial. Furthermore, every
element of $\mathrm{Aut_{k}}\left(B,\mathrm{ML}(B)\right)$ induces
for every $i\geq1$ an automorphism of $\mathcal{F}_{0}$-module of
each $\mathcal{F}_{i}$. In this section we illustrate how to exploit
these information to compute $\mathrm{Aut_{k}}(B)$ for certain semi-rigid
$k$-domains $B$.

\subsubsection{\label{sub}\emph{ }\textbf{\emph{$\mathrm{Aut_{k}}$ for example}}\emph{
}\ref{ex:Daniel}}

\indent\newline\noindent  In \cite{key-Makar} Makar-Limanov computed
the $k$-automorphism group for surfaces in $k^{[3]}$ defined by
equation $X^{n}Z-P(Y)=0$ where $n>1$ and $\deg_{Y}P(Y)>1$. Then
Poloni, see \cite{key-Poloni}, generalized Makar-Limanov's method
to obtained similar results for the rings considered in \ref{ex:Daniel}
above. Here we briefly indicate how to recover these results using
$\mathrm{LND}$-filtrations. So let 
\[
B_{n,P}=k[X,Y,Z]/\left\langle X^{n}Z-P(X,Y)\right\rangle 
\]
where \textbf{$P(X,Y)=Y^{m}+f_{m-1}(X)Y^{m-1}+\cdots+f_{0}(X)$, $f_{i}(X)\in k[X]$}{\scriptsize ,}
$n\geq2$, and $m\geq2$. Up to change of variable of the form $Y$
by $Y-\frac{f_{m-1}(X)}{m}$ we may assume without loss of generality
that $f_{m-1}(X)=0$. 
\begin{prop}
\label{nnn}Let $B_{n,P}$ be as above. Then every algebraic $k$-automorphism
$\alpha$ of $B$ has the form 
\[
\alpha(x,y,z)=(\lambda x,\,\mu y+x^{n}a(x),\,\frac{\mu^{m}}{\lambda^{n}}z+\frac{P(\lambda x,\mu y+x^{n}a(x))-\mu^{m}P(x,y)}{\lambda^{n}x^{n}})
\]
where $\lambda,\mu\in k^{*}$ satisfy $f_{m-i}(\lambda x)\equiv\mu^{i}.f_{m-i}(x)$
mod $x^{n}$ for all $i$, and $a(x)\in k[x]$.\end{prop}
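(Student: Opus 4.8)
The plan is to exploit semi-rigidity (established in Proposition~\ref{prop:Daniel-example}) together with Corollary~\ref{Cor:preserve-almost-rigid-filtration}, which forces any $\alpha\in\mathrm{Aut_{k}}(B_{n,P})$ to preserve the $\partial$-filtration degree-by-degree. Since $\mathrm{ML}(B_{n,P})=k[x]=\mathcal{F}_{0}$ is invariant under $\alpha$, the restriction $\alpha|_{k[x]}$ is a $k$-automorphism of $k[x]$, hence $\alpha(x)=\lambda x+c$ for some $\lambda\in k^{*}$, $c\in k$; after the normalization $0\in\mathrm{Spec}(B)$ one checks $c=0$, giving $\alpha(x)=\lambda x$. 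Next, because $\deg_{\partial}(y)=1$ means $y\in\mathcal{F}_{1}\setminus\mathcal{F}_{0}$ and $\mathcal{F}_{1}=k[x]y+k[x]$ (the local slices together with the kernel), preservation of the filtration forces $\alpha(y)\in\mathcal{F}_{1}$, so $\alpha(y)=\mu y+p(x)$ for some $\mu\in k^{*}$ (the leading $\mathcal{F}_{1}/\mathcal{F}_{0}$ coefficient must be a unit, else $\alpha$ would not be invertible) and $p(x)\in k[x]$.

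The next step is to pin down $p(x)$ and the image of $z$. First I would observe that applying $\alpha$ to the defining relation $x^{n}z=P(x,y)$ and using $\alpha(x)=\lambda x$ forces $(\lambda x)^{n}\alpha(z)=P(\lambda x,\,\mu y+p(x))$, so $\alpha(z)$ is determined as $\alpha(z)=\lambda^{-n}x^{-n}P(\lambda x,\mu y+p(x))$, provided the right-hand side genuinely lies in $B_{n,P}$. Comparing with the requirement $\deg_{\partial}(z)=m$ and that $\alpha(z)$ be a polynomial forces a divisibility condition: $P(\lambda x,\mu y+p(x))$ must be congruent to $\mu^{m}P(x,y)$ modulo $x^{n}$ so that the leading $\overline{z}$-term is scaled by $\mu^{m}/\lambda^{n}$ as dictated by the graded relation $\overline{x}^{n}\overline{z}=\overline{y}^{m}$ in $\mathrm{Gr}(B_{n,P})$. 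Writing $p(x)=x^{n}a(x)$ (the divisibility by $x^{n}$ will come out of the integrality/polynomiality requirement, since lower-order terms in $p$ would break membership in the algebra) and expanding $P(\lambda x,\mu y+x^{n}a(x))$, I would extract the coefficient of each power $y^{m-i}$ and match it against $\mu^{m}$ times the coefficient in $P(x,y)$, which yields exactly the stated constraints $f_{m-i}(\lambda x)\equiv\mu^{i}f_{m-i}(x)\pmod{x^{n}}$.

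Finally, I would verify that the displayed formula does define an honest $k$-automorphism: the congruence conditions guarantee that $P(\lambda x,\mu y+x^{n}a(x))-\mu^{m}P(x,y)$ is divisible by $x^{n}$, so the proposed $\alpha(z)$ is a genuine element of $B_{n,P}$, and a direct substitution confirms the defining relation is preserved; invertibility follows since $\lambda,\mu\neq 0$ and the triangular shape of $\alpha$ (acting on $x$ by scaling, on $y$ by an affine-in-$y$ map over $k[x]$, and on $z$ by the forced formula) makes the inverse of the same form with $\lambda^{-1},\mu^{-1}$. The main obstacle I anticipate is the forward direction's bookkeeping: showing that $\alpha(y)$ has \emph{no} term of $\partial$-degree exceeding $1$ and that the additive part is divisible by $x^{n}$, both of which rest on carefully using that $\alpha$ preserves each $\mathcal{F}_{i}$ as a $k[x]$-module and that $\mathrm{gr}$ is multiplicative (Lemma~\ref{lem:graded relation}, P1) so that the induced map on $\mathrm{Gr}(B_{n,P})=k[\overline{X},\overline{Y},\overline{Z}]/\langle\overline{X}^{n}\overline{Z}-\overline{Y}^{m}\rangle$ respects the homogeneous relation; the congruence conditions on the $f_{m-i}$ are precisely the obstruction to realizing a given $(\lambda,\mu)$ and emerge naturally from this graded compatibility.
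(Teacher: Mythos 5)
Your overall strategy---semi-rigidity plus Corollary~\ref{Cor:preserve-almost-rigid-filtration} to force $\alpha(\mathcal{F}_{i})=\mathcal{F}_{i}$, hence $\alpha(x)\in\mathcal{F}_{0}=k[x]$, $\alpha(y)\in\mathcal{F}_{1}$, $\alpha(z)\in\mathcal{F}_{m}$, followed by applying $\alpha$ to the relation $x^{n}z=P(x,y)$ to extract the $x^{n}$-divisibility conditions---is the same as the paper's. But there is one genuine gap: the step $\alpha(x)=\lambda x$. You assert that ``after the normalization $0\in\mathrm{Spec}(B)$ one checks $c=0$.'' The normalization is merely a choice of coordinates; it gives no reason whatsoever why an automorphism cannot move the origin, and nothing in your sketch rules out $c\neq0$. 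The paper closes this point with an idea you never invoke: by Proposition~\ref{prop:Daniel-example}(2), every locally nilpotent derivation of $B_{n,P}$ is of the form $f(x)\partial$, so in particular $\partial_{\alpha}=f(x)\partial$; evaluating $\partial\alpha=\alpha\partial_{\alpha}$ at $y$ gives $\mu x^{n}=\partial(\alpha(y))=f(\alpha(x))\left(\lambda x+c\right)^{n}$, whence $x$ divides $(\lambda x+c)^{n}$ in $k[x]$ and $c=0$. (Alternatively, $c=0$ can be squeezed out of your own next step: writing $\alpha(z)=\xi z+h(x,y)$ with $\deg_{y}h\leq m$ and expanding $(\lambda x+c)^{n}\alpha(z)=P(\lambda x+c,\alpha(y))$ in the decomposition of $B$ as a $k[x,y]$-module, the coefficients of $x^{j}z$ for $j<n$ must all vanish, and the coefficient of $x^{n-1}z$ is $n\xi\lambda^{n-1}c$. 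But some such argument must actually be supplied.)

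A second, more minor point of looseness: your phrase ``$\alpha(z)$ be a polynomial forces the divisibility'' conflates divisibility in $B$ with divisibility in $k[x,y]$; note that $P(x,y)=x^{n}z$ is divisible by $x^{n}$ in $B$ but certainly not in $k[x,y]$, so ``membership in the algebra'' alone is not the mechanism. The correct argument (which is what the paper does, and what your appeal to the graded relation $\overline{x}^{n}\overline{z}=\overline{y}^{m}$ can be turned into) is: after identifying the $z$-coefficient of $\alpha(z)$ as $\mu^{m}/\lambda^{n}$, one is left with an identity in $k[x,y]$, namely $\lambda^{n}x^{n}h(x,y)=P(\lambda x,\mu y+p(x))-\mu^{m}P(x,y)$; comparing coefficients of $y^{m-1}$ (here the normalization $f_{m-1}=0$ is used) gives $x^{n}\mid m\mu^{m-1}p(x)$, hence $p(x)=x^{n}a(x)$, and the remaining coefficients give the congruences $f_{m-i}(\lambda x)\equiv\mu^{i}f_{m-i}(x)$ mod $x^{n}$. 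With the $c=0$ step repaired and this coefficient comparison made explicit, your proof coincides with the paper's; your added verification that the displayed formula is indeed an automorphism is extra content the paper omits.
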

\begin{proof}
By Proposition \ref{prop:Daniel-example}, (1) and \S \ref{ex:Daniel},
$\mathrm{ML}(B)=k[x]$ and the $\partial$-filtration $\{\mathcal{F}_{i}\}_{i\in\mathbb{N}}$
is given by $\mathcal{F}_{im+j}=k[x]y^{im+j}+k[x]y^{im+j-m}z+\ldots+k[x]y^{j}z^{i}+\mathcal{F}_{im+j-1}$,
where $\partial=x^{n}.\partial_{y}+\frac{\partial P}{\partial y}.\partial_{z}$,
$\deg_{\partial}(x)=0$, $\deg_{\partial}(y)=1$, and $\deg_{\partial}(z)=m$.
In particular $\mathcal{F}_{0}=k[x]$, $\mathcal{F}_{1}=k[x]y+\mathcal{F}_{0}$,
and $\mathcal{F}_{m}=k[x]y^{m}+k[x]z+\mathcal{F}_{m-1}$. 

Now by Corollary \ref{Cor:preserve-almost-rigid-filtration} $\alpha$
preserve $\deg_{\partial}$, so we must have $\alpha(x)\in\mathcal{F}_{0}=k[x]$,
$\alpha(y)\in\mathcal{F}_{1}=k[x]y+k[x]$ and $\alpha(z)\in\mathcal{F}_{m}=k[x]y^{m}+k[x].z+\mathcal{F}_{m-1}$.
Since $\alpha$ is invertible we get $\alpha(x)=\lambda x+c$, $\alpha(y)=\mu y+b(x)$,
and $\alpha(z)=\xi z+h(x,y)$ where $\lambda,\mu,\xi\in k^{*}$, $c\in k$,
$b\in k[x]$, $h(x,y)\in k[x,y]$, and $\deg_{y}h(x,y)\leq m$.

By Proposition \ref{prop:Daniel-example} (2) every $D\in\mathrm{LND}(B)$
has the form $D=f(x)\partial$. In particular, $\partial_{\alpha}=f(x)\partial$
for some $f(x)\in k[x]$. Since $\alpha\partial_{\alpha}=\partial\alpha$
we have $\partial(\alpha(y))=\alpha(f(x)\partial(y))=f(\alpha(x))\alpha(x^{n})$
where ($\partial(y)=x^{n}$). So we get $\partial(\mu y+b(x))=f(\alpha(x))\left(\lambda x+c\right)^{n}$.
Since $\partial(\mu y+b(x))=\mu x^{n}$, $x$ divides $(\lambda x+c)^{n}$
in $k[x]$, and this is possible only if $c=0$, so we get $\alpha(x)=\lambda x$.

Applying $\alpha$ to the relation $x^{n}z=P(x,y)$ in $B_{P,n}$,
we get $\lambda^{n}x^{n}\alpha(z)=P(\lambda x,\mu y+b(x))=\mu^{m}P(x,y)+m\mu^{m-1}y^{m-1}b(x)+H(x,y)$
where $\deg_{y}H\leq m-2$. Since $x^{n}$ divides $P=x^{n}z$ and
$\deg_{y}H\leq m-2$, $x^{n}$ divides $m\mu^{m-1}y^{m-1}b(x)+H(x,y)$
in $k[x,y]$. So $x^{n}$ divides $b(x)$, i.e. $\alpha(y)=\mu y+x^{n}a(x)$. 

In addition, $x^{n}$ divides every coefficient of $H$ as a polynomial
in $y$, so $x^{n}$ divides $-\mu^{m}f_{m-i}(x)+\mu^{m-i}f_{m-i}(\lambda x)$
because coefficients of $H(y)$ are of the form $q(x,y)b(x)-\mu^{m}f_{m-i}(x)+\mu^{m-i}f_{m-i}(\lambda x)$
and $b(x)$ is divisible by $x^{n}$. So $x^{n}$ divides $-\mu^{i}f_{m-i}(x)+f_{m-i}(\lambda x)$
for every $i$. And we are done. 
\end{proof}

\subsubsection{\textbf{\emph{$\mathrm{Aut_{k}}$ for example}}\textbf{ }\ref{a more general case}}

\indent\newline\noindent  The same method as in \ref{sub} can be
applied to compute automorphism groups of rings $R$ defined as in
Theorem \ref{fdsa}. For simplicity we only deal with the case where
$Q(X,Y)=Y^{m}$, the general case can be deduced in the same way at
the cost of longer and more complicated computation. Again we make
a substitution in $S$ as in \ref{sub} to get relation of the form
presented in the following result:
\begin{thm}
\label{ll} Let $R$ denote the ring $R=k[X,Y,Z]/\left\langle X^{n}Y-P(X,Y^{m}-X^{e}Z)\right\rangle =k[x,y,z]$
where $P(X,S)=S^{d}+f_{d-2}(X)S^{d-2}+\cdots+f_{1}(X)S+f_{0}(S)$,
$n\geq2$, $d\geq2$, $m\geq1$, and $e\in\mathbb{N}$. Then, every
algebraic $k$-automorphism of $B$ has the form 
\[
\alpha(x,y,z)=(\lambda x,\,\frac{\mu^{d}}{\lambda^{n}}y+F,\,\frac{\mu^{dm}}{\lambda^{nm+e}}.z+\frac{(\frac{\mu^{d}}{\lambda^{n}}y+F)^{m}-\frac{\mu^{dm}}{\lambda^{nm}}y^{m}+x^{n+e}a(x)}{\lambda x^{e}})
\]
where:

$\lambda,\mu\in k^{*}$ verify both $\frac{\mu^{dm}}{\lambda^{nm}}=\mu$
and $f_{d-i}(\lambda x)\equiv\mu^{i}f_{d-i}(x)$ mod $x^{n+e}$ for
every $i\in\{2,\ldots,d\}$. $s=y^{m}-x^{e}z$ , and $F=\frac{P(\lambda x,\mu s+x^{n+e}a(x))-\mu^{d}P(x,s)}{\lambda^{n}x^{n}}$,
$a(x)\in k[x]$.\\
\end{thm}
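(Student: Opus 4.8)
The plan is to follow the strategy of Proposition~\ref{nnn}, using that $R$ is semi-rigid with $\mathrm{ML}(R)=k[x]$ by Theorem~\ref{fdsa}. By Corollary~\ref{Cor:preserve-almost-rigid-filtration} every $\alpha\in\mathrm{Aut}_{k}(R)$ preserves $\deg_{\partial}$, where $\deg_{\partial}(x)=0$, $\deg_{\partial}(s)=1$, $\deg_{\partial}(y)=d$, $\deg_{\partial}(z)=md$; by \ref{a more general case} the relevant pieces of the filtration are $\mathcal{F}_{0}=k[x]$, $\mathcal{F}_{1}=k[x]s+k[x]$, $\mathcal{F}_{d}=k[x]y+\mathcal{F}_{d-1}$ and $\mathcal{F}_{md}=k[x]z+\mathcal{F}_{md-1}$. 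Since $\alpha$ preserves each $\mathcal{F}_{i}$, it induces a graded $k$-automorphism $\overline{\alpha}$ of $\mathrm{Gr}(R)$. As $\alpha(\mathrm{ML}(R))=\mathrm{ML}(R)$, the restriction of $\alpha$ to $k[x]$ is a $k$-automorphism, so $\alpha(x)=\lambda x+c$ with $\lambda\in k^{*}$, $c\in k$. Because the degree-$1$ component $\mathrm{Gr}(R)_{1}=k[x]\overline{s}$ is a free $k[x]$-module of rank one on which $\overline{\alpha}$ acts bijectively, the coefficient of $s$ in $\alpha(s)$ must be a unit of $k[x]$, hence a nonzero scalar; thus $\alpha(s)=\mu s+b(x)$ with $\mu\in k^{*}$ and $b(x)\in k[x]$.

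First I would pin down $c$. Since $\partial_{\alpha}=\alpha^{-1}\partial\alpha\in\mathrm{LND}(R)$, Theorem~\ref{fdsa}(2) gives $\partial_{\alpha}=f(x)\partial$ for some $f\in k[x]$, and evaluating $\partial\alpha=\alpha\partial_{\alpha}$ at $s$ with $\partial(s)=x^{n+e}$ yields
\[
\mu x^{n+e}=\partial(\alpha(s))=f(\alpha(x))\,\alpha(x)^{n+e}=f(\lambda x+c)(\lambda x+c)^{n+e}.
\]
As $\mu\in k^{*}$, comparing the two polynomials forces $(\lambda x+c)^{n+e}$ to be a scalar multiple of $x^{n+e}$, whence $c=0$ and $\alpha(x)=\lambda x$.

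Next I would apply $\alpha$ to the two defining relations $x^{n}y=P(x,s)$ and $x^{e}z=y^{m}-s$. The first gives $\lambda^{n}x^{n}\alpha(y)=P(\lambda x,\mu s+b(x))$, so that
\[
\alpha(y)=\frac{1}{\lambda^{n}x^{n}}\,P(\lambda x,\mu s+b(x)),
\]
and the requirement $\alpha(y)\in R$ is extracted by expanding in the $k[x]$-module basis $1,s,\dots,s^{d-1},y$ of $\mathcal{F}_{d}$, in which $s^{d}$ is rewritten via $s^{d}=x^{n}y-\sum_{i\ge2}f_{d-i}(x)s^{d-i}$. The coefficient of $s^{d-1}$ in $\alpha(y)$ is $d\mu^{d-1}b(x)/(\lambda^{n}x^{n})$, forcing $x^{n}\mid b(x)$ (here char $k=0$ is used); and the coefficient of each $s^{d-i}$ with $i\ge2$, once the $b$-divisible terms are discarded, forces $f_{d-i}(\lambda x)\equiv\mu^{i}f_{d-i}(x)\bmod x^{n}$. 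Setting $\nu=\mu^{d}/\lambda^{n}$ and $F=(\lambda^{n}x^{n})^{-1}\bigl(P(\lambda x,\mu s+b(x))-\mu^{d}P(x,s)\bigr)$, one gets $\alpha(y)=\nu y+F$ with $F\in\mathcal{F}_{d-1}$, which is the announced form as soon as $b(x)=x^{n+e}a(x)$ is known.

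The hard part will be the second relation. Substituting $\alpha(y)=\nu y+F$ into $\lambda^{e}x^{e}\alpha(z)=\alpha(y)^{m}-\mu s-b(x)$ and replacing $y^{m}$ by $s+x^{e}z$, the term $\nu^{m}x^{e}z$ recovers the leading part of $\alpha(z)$ (compatibly with $\overline{\alpha}(\overline{z})=(\nu^{m}/\lambda^{e})\overline{z}$, read off from the graded relation $\overline{x}^{e}\overline{z}=\overline{y}^{m}$), while the piece of $\alpha(y)^{m}-\mu s-b(x)$ not divisible by $x$ must vanish for $\alpha(z)$ to lie in $R$. Isolating the coefficient of $s$ in the basis $\{s^{l}y^{j}z^{i}\}$ should yield the non-graded constraint $\nu^{m}=\mu$, i.e. $\mu^{dm}/\lambda^{nm}=\mu$; and carrying the extra factor $x^{e}$ through the $m$-th power $\alpha(y)^{m}$ should upgrade $x^{n}\mid b(x)$ to $x^{n+e}\mid b(x)$ and the congruences from $\bmod\,x^{n}$ to $\bmod\,x^{n+e}$. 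I expect this refinement from $x^{n}$ to $x^{n+e}$ to be the main obstacle, since one must show that no cross term of $\alpha(y)^{m}$ obstructs divisibility by $x^{e}$, and controlling these terms (where $\overline{s}$ is not divisible by $\overline{x}$, yet $R/xR$ is not a domain) is the technical heart of the argument. Once $b(x)=x^{n+e}a(x)$ is established, back-substitution into the expressions for $\alpha(y)$ and $\alpha(z)$ produces the displayed formulas, and conversely one checks directly that every map of the stated shape, subject to the two constraints relating $\lambda,\mu$ and to the congruences on the $f_{d-i}$, is a well-defined $k$-automorphism of $R$.
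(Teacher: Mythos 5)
Your proposal is correct and follows essentially the same route as the paper: semi-rigidity and filtration preservation to force $\alpha(x)=\lambda x+c$ and $\alpha(s)=\mu s+b(x)$, the conjugation identity $\partial_{\alpha}=f(x)\partial$ applied to the local slice $s$ to kill $c$, then the two defining relations to determine $\alpha(y)$, $\alpha(z)$ and extract the constraints. The step you flag as the main obstacle is exactly where the paper's proof closes the argument: since $F\in k[x,s]$, the residual term $G=(\mu^{dm}/\lambda^{nm}-\mu)s+m(\mu^{d}\lambda^{-n}y)^{m-1}F+\cdots+F^{m}-x^{n}a(x)$ lies in the polynomial subring $k[x,s,y]$ of $R$, so $x^{e}\mid G$ can be tested coefficientwise in $y$, yielding simultaneously $\mu^{dm}/\lambda^{nm}=\mu$ and $x^{e}\mid F$, which is precisely the upgrade of $x^{n}\mid b(x)$ to $x^{n+e}\mid b(x)$ and of the congruences to mod $x^{n+e}$.
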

\begin{proof}
A similar argument as in the proof of Proposition \ref{nnn} leads
to $\alpha(x)=\lambda x$ and $\alpha(s)=\mu s+x^{n}a(x)$ where $\lambda,\mu\in k^{*}$
verify $f_{d-i}(\lambda x)\equiv\mu^{i}f_{d-i}(x)$ mod$x^{n}$ for
all $i$. Now $\alpha(x)$ and $\alpha(s)$ determine 
\[
\alpha(y)=\frac{\mu^{d}}{\lambda^{n}}y+\frac{P(\lambda x,\mu s+x^{n}a(x))-\mu^{d}P(x,s)}{\lambda^{n}x^{n}}.
\]

Apply $\alpha$ to $x^{e}z=y^{m}-s$ to get $\lambda^{e}x^{e}\alpha(z)=(\frac{\mu^{h}}{\lambda^{n}}y+F)^{m}-\mu s-x^{n}a(x)$
where $F=\frac{P(\lambda x,\mu s+x^{n}a(x))-\mu^{h}P(x,s)}{\lambda^{n}x^{n}}\in k[x,s]$.
So we have $\lambda^{e}x^{e}\alpha(z)=[\frac{\mu^{hm}}{\lambda^{nm}}y^{m}-\frac{\mu^{hm}}{\lambda^{nm}}s]+(\frac{\mu^{hm}}{\lambda^{nm}}-\mu)s+m(\frac{\mu^{h}}{\lambda^{n}}y)^{m-1}F+\ldots+F^{m}-x^{n}a(x)$.
Since $\frac{\mu^{hm}}{\lambda^{nm}}y^{m}-\frac{\mu^{hm}}{\lambda^{nm}}s=\frac{\mu^{hm}}{\lambda^{nm}}x^{e}z$,
we see that $x^{e}$ divides $G:=(\frac{\mu^{hm}}{\lambda^{nm}}-\mu)s+m(\frac{\mu^{h}}{\lambda^{n}}y)^{m-1}F+\ldots+F^{m}-x^{n}a(x)$
in $k[x,s,y]\subset B$ because $F\in k[x,s]$. Thus $x^{e}$ divides
every coefficients of $G$ as a polynomial in $y$ because $\deg_{s}G\leq d-1$.
So $x^{e}$ divides $F$ and $\frac{\mu^{hm}}{\lambda^{nm}}-\mu=0$.
This means that $x^{n+e}$ divides $f_{d-i}(\lambda x)-\mu^{i}f_{d-i}(x)$
for all $i\in\{2,\ldots,d\}$ (see proof of Proposition \ref{nnn}),
and $\frac{\mu^{hm}}{\lambda^{nm}}=\mu$. Finally, by the relation
\textit{$s=y^{m}-x^{e}z$,} we get $\alpha(z)=\frac{\mu^{hm}}{\lambda^{nm+e}}z+\frac{m(\frac{\mu^{h}}{\lambda^{n}}y)^{m-1}F+\ldots+F^{m}+x^{n}a(x)}{\lambda x^{e}}$,
and we are done.
\end{proof}

\subsection{\label{ISO} Isomorphisms}

\indent\newline\noindent  We are going to use the previous facts
about semi-rigid $k$-domains to give a necessary and sufficient condition
for two hypersurfaces of the family \ref{ex:Daniel} to be isomorphic. 

Let $\Psi:A\longrightarrow B$ be an algebraic isomorphism, we refer
to this by $\Psi\in\mathrm{Isom}{}_{k}(A,B)$, between two finitely
generated $k$-domains $A=k[y_{1},\ldots,y_{r}]$, $B=k[x_{1},\ldots,x_{r}]$
where $y_{1},\ldots,y_{r}$ and $x_{1},\ldots,x_{r}$ are minimal
sets of generators. Since $\Psi(A)=k[\Psi(y_{1}),\ldots,\Psi(y_{r})]=k[x_{1},\ldots,x_{r}]$,
there exists an automorphism $\psi:B\longrightarrow B$ such that
for every $i\in\{1,\ldots,r\}$ there exists $j\in\{1,\ldots,r\}$
such that $\psi(x_{i})=\Psi(y_{j})$. 

Given $\partial\in\mathrm{LND}(B)$ and $\Psi\in\mathrm{Isom}{}_{k}(A,B)$.
For any $n\in\mathbb{N}$ we have $(\Psi^{-1}\partial\Psi)^{n}=\Psi^{-1}\partial^{n}\Psi$.
So we see that $\Psi^{-1}\partial\Psi\in\mathrm{LND}(A)$. An immediate
result is $\Psi(\mathrm{ML}(A))=\mathrm{ML}(B)$ for any $\Psi\in\mathrm{Isom}{}_{k}(A,B)$.
Denote $\partial_{\Psi}:=\Psi^{-1}\partial\Psi$,  we have the following
properties 
\begin{enumerate}
\item $\Psi\{\ker(\partial_{\Psi})\}=\ker(\partial)$.
\item $\deg_{\partial_{\Psi}}(a)=\deg_{\partial}(\Psi(a))$ for all $a\in A$.
\end{enumerate}
In \cite{key-Poloni}, Poloni obtained similar results to the next
Proposition.
\begin{prop}
\label{prop:iso}Let $B_{1}=k[x_{1},y_{1},z_{1}]$ and $B_{2}=k[x_{2},y_{2},z_{2}]$
be as in \ref{ex:Daniel} where 
\[
P_{1}=y_{1}^{m_{1}}+f_{m_{1}-2}(x_{1})y_{1}^{m_{1}-2}+\cdots+f_{0}(x_{1})
\]
 
\[
P_{2}=y_{2}^{m_{2}}+g_{m_{2}-2}(x_{2})y_{2}^{m_{2}-2}+\cdots+g_{0}(x_{2})
\]
 $f_{i}(x_{1})\in k[x_{1}]$, $g_{i}(x_{2})\in k[x_{2}]$ and $n_{i}>1,m_{i}>1$.
Then 

$B_{1}$ and $B_{2}$ are isomorphic if and only if $n=n_{1}=n_{2}$,
$m=m_{1}=m_{2}$, and $f_{m-i}(\lambda x)\equiv\mu^{i}g_{m-i}(x)$
mod $x^{n}$ for every $i\in\{2,\ldots,m\}$ .
\end{prop}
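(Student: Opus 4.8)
The plan is to prove both implications by exploiting the rigid structure already established for these Danielewski hypersurfaces. Throughout I write $\partial_1,\partial_2$ for the standard locally nilpotent derivations of $B_1,B_2$ and use from Proposition \ref{prop:Daniel-example} that $\mathrm{ML}(B_i)=k[x_i]$, that each $B_i$ is semi-rigid, and that (via part (2) together with Proposition \ref{Pro:alri-rigid one dimensional}) each $\partial_i$ is, up to a unit, the unique irreducible locally nilpotent derivation of $B_i$.

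For necessity, suppose $\Psi:B_1\to B_2$ is an isomorphism. First I would use $\Psi(\mathrm{ML}(B_1))=\mathrm{ML}(B_2)$ (see \S\ref{ISO}) to get $\Psi(k[x_1])=k[x_2]$, hence $\Psi(x_1)=\lambda x_2+c$ with $\lambda\in k^{*}$. Next, since $B_1$ is semi-rigid the pulled-back derivation $\partial_{2,\Psi}=\Psi^{-1}\partial_2\Psi$ has kernel $\mathrm{ML}(B_1)=\ker\partial_1$ and is irreducible because $\Psi$ is an isomorphism; Proposition \ref{Pro:alri-rigid one dimensional} then forces $\partial_{2,\Psi}=c_0\partial_1$ for some $c_0\in k^{*}$, i.e. $\partial_2\Psi=c_0\Psi\partial_1$. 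Degree preservation ($\deg_{\partial_2}\circ\Psi=\deg_{\partial_1}$, Corollary \ref{Cor:preserve-almost-rigid-filtration}) places $\Psi(y_1)\in\mathcal{F}_1^{(2)}=k[x_2]y_2+k[x_2]$ and $\Psi(z_1)\in\mathcal{F}_{m_1}^{(2)}$, so I write $\Psi(y_1)=\alpha(x_2)y_2+\beta(x_2)$ with $\alpha\neq0$. Applying $\partial_2\Psi=c_0\Psi\partial_1$ to $y_1$ and using $\partial_1(y_1)=x_1^{n_1}$, $\partial_2(y_2)=x_2^{n_2}$ yields the polynomial identity $\alpha(x_2)x_2^{n_2}=c_0(\lambda x_2+c)^{n_1}$. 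Comparing constant terms forces $c=0$, whence $\alpha(x_2)=c_0\lambda^{n_1}x_2^{\,n_1-n_2}$; requiring $\alpha$ to be a polynomial gives $n_1\geq n_2$, and the same argument for $\Psi^{-1}$ gives the reverse inequality, so $n_1=n_2=:n$ and $\alpha=\mu:=c_0\lambda^{n}\in k^{*}$. To obtain $m_1=m_2$ I observe that $\deg_{\partial_2}\Psi(z_1)=m_1$, so if $m_1<m_2$ then $\mathcal{F}_{m_1}^{(2)}\subseteq k[x_2,y_2]$ and $B_2=k[\Psi(x_1),\Psi(y_1),\Psi(z_1)]=k[x_2,y_2]$, contradicting $z_2\notin k[x_2,y_2]$; symmetry again gives $m_1=m_2=:m$.

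The last and most delicate step extracts the congruence. Applying $\Psi$ to the defining relation $x_1^{n}z_1=P_1(x_1,y_1)$ gives $\lambda^{n}x_2^{n}\Psi(z_1)=P_1(\lambda x_2,\mu y_2+b)$ with $b:=\beta$. Expanding and subtracting $\mu^{m}P_2=\mu^{m}x_2^{n}z_2$, I write $P_1(\lambda x_2,\mu y_2+b)-\mu^{m}P_2=D(x_2,y_2)$ with $\deg_{y_2}D\leq m-1$, so that $x_2^{n}\bigl(\lambda^{n}\Psi(z_1)-\mu^{m}z_2\bigr)=D$ and hence $D\in x_2^{n}B_2$. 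The hard part is the divisibility lemma: because $B_2/x_2^{n}B_2\cong\bigl(k[x_2,y_2]/(x_2^{n},P_2)\bigr)[z_2]$ and $P_2$ is monic of degree $m$ in $y_2$ while $\deg_{y_2}D<m$, the condition $D\in x_2^{n}B_2$ is equivalent to $x_2^{n}\mid D$ in $k[x_2,y_2]$. Reading off the coefficient of $y_2^{m-1}$ gives $x_2^{n}\mid b$, and then, since the $b$-dependent contributions vanish modulo $x_2^{n}$, the coefficient of $y_2^{m-i}$ gives exactly $f_{m-i}(\lambda x)\equiv\mu^{i}g_{m-i}(x)\pmod{x^{n}}$ for $i\in\{2,\dots,m\}$.

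For sufficiency, given $n,m$ and $\lambda,\mu\in k^{*}$ satisfying the congruences, I would define $\Psi(x_1)=\lambda x_2$, $\Psi(y_1)=\mu y_2$ and $\Psi(z_1)=\frac{\mu^{m}}{\lambda^{n}}z_2+\frac{P_1(\lambda x_2,\mu y_2)-\mu^{m}P_2(x_2,y_2)}{\lambda^{n}x_2^{n}}$. The congruences guarantee that $P_1(\lambda x_2,\mu y_2)-\mu^{m}P_2(x_2,y_2)=\sum_{i\geq2}\mu^{m-i}\bigl(f_{m-i}(\lambda x_2)-\mu^{i}g_{m-i}(x_2)\bigr)y_2^{m-i}$ is divisible by $x_2^{n}$ in $k[x_2,y_2]$, so $\Psi(z_1)\in B_2$; a direct check using $x_2^{n}z_2=P_2$ shows $\Psi$ respects the defining relation of $B_1$, hence is a well-defined homomorphism, and it is an isomorphism because $x_2,y_2$ and then $z_2$ lie in its image (its inverse is built the same way). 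I expect the divisibility lemma in the necessity part to be the main technical obstacle; everything else is bookkeeping with the degree function and the explicit derivation relation.
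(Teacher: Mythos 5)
Your proof is correct, but it takes a genuinely different route from the paper's. The paper reduces the isomorphism problem to the automorphism problem: using the construction of \S\ref{ISO}, it asserts that $\Psi$ induces an automorphism $\psi$ of $B_{2}$ matching the two minimal generating sets, pins down the matching $\psi(x_{2})=\Psi(x_{1})$, $\psi(y_{2})=\Psi(y_{1})$, $\psi(z_{2})=\Psi(z_{1})$ by degree considerations, invokes Proposition \ref{nnn} to obtain the explicit form of $\Psi$ (which carries the congruences), gets $m_{1}=m_{2}$ from degree preservation, and gets $n_{1}=n_{2}$ by applying $\mathrm{gr}_{D}$ to the transformed defining relation and comparing monomials in the graded algebra. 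You instead work with $\Psi$ directly and never use Proposition \ref{nnn}: the intertwining relation $\partial_{2}\Psi=c_{0}\Psi\partial_{1}$, coming from uniqueness of the irreducible derivation, applied to $y_{1}$ yields $c=0$ and $n_{1}=n_{2}$ in one stroke via polynomiality of $\alpha$; the congruences are extracted through your divisibility lemma $B_{2}/x_{2}^{n}B_{2}\cong\bigl(k[x_{2},y_{2}]/(x_{2}^{n},P_{2})\bigr)[z_{2}]$ together with the monic-division argument; and you prove sufficiency explicitly, which the paper's proof leaves implicit (it only establishes necessity and the form of the isomorphisms). What each buys: the paper's route recycles the coefficient computations already done for Proposition \ref{nnn}, but it rests on the unproved assertion in \S\ref{ISO} that a generator-matching automorphism $\psi$ exists; your route is self-contained, avoids that assertion, and your quotient-ring justification of the step ``$D\in x_{2}^{n}B_{2}$ implies $x_{2}^{n}\mid D$ in $k[x_{2},y_{2}]$'' is cleaner than the paper's tacit use of the same fact. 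Two small repairs: your citation of Corollary \ref{Cor:preserve-almost-rigid-filtration} for degree preservation is a slip, since that corollary concerns automorphisms; what you need is property (2) of \S\ref{ISO} combined with semi-rigidity of $B_{1}$, namely $\deg_{\partial_{2}}(\Psi(a))=\deg_{\partial_{2,\Psi}}(a)=\deg_{\partial_{1}}(a)$. Also, your generation argument for $m_{1}=m_{2}$ works (since $z_{2}\notin k[x_{2},y_{2}]$ because $P_{2}$ is monic in $y_{2}$ and hence not divisible by $x_{2}^{n}$), but the same degree-transfer identity gives $m_{2}=\deg_{\partial_{2}}(\Psi(z_{1}))=\deg_{\partial_{1}}(z_{1})=m_{1}$ in one line, as in the paper.
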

\noindent  \textit{In addition, every isomorphism between $B_{1}$
and $B_{2}$ takes the form }\textit{\small 
\[
\Psi(x_{1},y_{1},z_{1})=(\lambda x_{2},\,\mu y_{2}+x_{2}^{n}a(x_{2}),\,\frac{\mu^{m}}{\lambda^{n}}z_{2}+\frac{P_{1}(\lambda x_{2},\mu y_{2}+x_{2}^{n}a(x_{2}))-\mu^{m}P_{2}(x_{2},y_{2})}{\lambda^{n}x_{2}^{n}}),
\]
}\textit{where $a(x)\in k[x]$ and $\lambda\in k^{*}$, $\mu\in k^{*}$
satisfy $f_{m-i}(\lambda x)\equiv\mu^{i}g_{m-i}(x)$ mod $x^{n}$
for all $i$.}
\begin{proof}
Let $D\in\mathrm{LND}(B_{2})$, and let $\Psi\in\mathrm{Isom}{}_{k}(B_{1},B_{2})$.
By property 2, $\deg_{D_{\Psi}}(x_{1})=\deg_{D}(\Psi(x_{1}))$, but
we saw before that $\deg_{E}(x_{1})=0$ for all $E\in\mathrm{LND}(B_{1})$,
so we get $\deg_{D_{\Psi}}(x_{1})=0=\deg_{D}(\Psi(x_{1}))$ and $\Psi(x_{1})\in\mathcal{F}_{0}=k[x_{2}]$.
The same argument shows that $\deg_{D_{\Psi}}(y_{1})=1=\deg_{D}(\Psi(y_{1}))$,
$\Psi(y_{1})\in\mathcal{F}_{1}-\mathcal{F}_{0}$, and $\deg_{D_{\Psi}}(z_{1})=m_{1}=\deg_{D}(\Psi(z_{1}))$.
This implies that the only possibility for $\psi$ defined as in \ref{ISO}
is $\psi(x_{2})=\Psi(x_{1})$, $\psi(y_{2})=\Psi(y_{1})$ and $\psi(z_{2})=\Psi(z_{1})$. 

Now by Proposition \ref{nnn} {\small 
\[
\psi(x_{2},y_{2},z_{2})=(\lambda x_{2},\,\mu y_{2}+x_{2}^{n_{2}}a(x_{2}),\,\frac{\mu^{m_{2}}}{\lambda^{n_{2}}}z_{2}+\frac{Q_{2}(\lambda x_{2},\mu y_{2}+x_{2}^{n_{2}}a(x_{2}))-\mu^{m_{2}}Q_{2}(x_{2},y_{2})}{\lambda^{n_{2}}x_{2}^{n_{2}}}),
\]
}{\small \par}

where $a(x_{2})\in k[x_{2}]$ and $\lambda\in k^{*}$, $\mu\in k^{*}$
such that $g_{m-i}(\lambda x)\equiv\mu^{i}g_{m-i}(x)$ mod $x^{n_{2}}$
for all $i$. So we get

{\small 
\[
\Psi(x_{1},y_{1},z_{1})=(\lambda x_{2},\,\mu y_{2}+x_{2}^{n_{2}}a(x_{2}),\,\frac{\mu^{m_{2}}}{\lambda^{n_{2}}}z_{2}+\frac{Q_{2}(\lambda x_{2},\mu y_{2}+x_{2}^{n_{2}}a(x_{2}))-\mu^{m_{2}}Q_{2}(x_{2},y_{2})}{\lambda^{n_{2}}x_{2}^{n_{2}}}).
\]
}{\small \par}

Since $\deg_{D}(z_{2})=m_{2}$ for any non-zero $D\in\mathrm{LND}(B_{2})$,
and since $\psi$ preserves $\deg_{D}$, we get $m_{2}=\deg_{D}(z_{2})=\deg_{D}(\psi(z_{2}))=\deg_{D}(\Psi(z_{1}))=\deg_{D_{\Psi}}(z_{1})=m_{1}$,
i.e. $m_{1}=m_{2}$.

By applying $\Psi$ to relation $x_{1}^{n_{1}}z_{1}=Q_{1}(x_{1},y_{1})$
in $B_{1}$, we obtain 

{\small 
\[
\lambda^{n_{1}}x_{2}^{n_{1}}(\frac{\mu^{m_{2}}}{\lambda^{n_{2}}}z_{2}+\frac{Q_{2}(\lambda x_{2},\mu y_{2}+x_{2}^{n_{2}}a(x_{2}))-\mu^{m_{2}}Q_{2}(x_{2},y_{2})}{\lambda^{n_{2}}x_{2}^{n_{2}}})=Q_{1}(\lambda x_{2},\mu y_{2}+x_{2}^{n_{2}}a(x_{2})).
\]
}{\small \par}

Applying the map $gr_{D}$ to the last equation, we get

\[
\lambda^{n_{1}}\frac{\mu^{m_{2}}}{\lambda^{n_{2}}}\overline{x}_{2}^{n_{1}}\overline{z}_{2}=\mu^{m_{2}}\overline{y_{2}}^{m_{2}}.
\]

On the other hand $\overline{x}_{2}^{n_{2}}\overline{z}_{2}=\overline{y}_{2}^{m_{2}}$
(apply $gr_{D}$ to $x_{2}^{n_{2}}z_{2}=Q_{2}(x_{2},y_{2})$), the
last two equations give $\lambda^{n_{1}}\frac{\mu^{m_{2}}}{\lambda^{n_{2}}}\overline{x}_{2}^{n_{1}}=\mu^{m_{2}}\overline{x}_{2}^{n_{2}}$
which means that $n_{1}=n_{2}$. We could have obtained that $n_{1}=n_{2}$
in this way: from $\lambda^{n_{1}}x_{2}^{n_{1}}\Psi(z_{1})=Q_{1}(\lambda x_{2},\mu y_{2}+x_{2}^{n_{2}}a(x_{2}))$
we conclude that $\lambda^{n_{1}}x_{2}^{n_{1}}\partial(\Psi(z_{1}))=\partial(y_{2})H(x_{2},y_{2})$
where $\deg_{y_{2}}H<m_{2}$ and $x_{2}$ does not divide $H$. So
$x_{2}^{n_{1}}$ divides $\partial(y_{2})=x_{2}^{n_{2}}$ where $\partial$
is defined as in \ref{ex:Daniel}, which mean that $n_{1}\leq n_{2}$.
Since $B_{1}$ and $B_{2}$ play symmetric roles the equality follows.
\end{proof}
\indent\newline\noindent

\end{document}